\title{Some results on contraction rates for Bayesian inverse problems}
\author{Madhuresh} \address{TIFR Centre for Applicable Mathematics,
  Post Bag No. 6503, GKVK Post Office, Bangalore-560065, India}
\email{madhuresh@math.tifrbng.res.in}
\subjclass[2010]{62Gxx }
\keywords{Bayesian asymptotics}
\thanks{This research work benefited from the support of the AIRBUS
  Group Corporate Foundation Chair in Mathematics of Complex Systems
  established in CAM and ICTS, TIFR, Bangalore. The authors would like
  to thanks Andrew Stuart for the initial discussions which motivated
  this work.}
\newtheorem{theorem}{Theorem}[section]
\newtheorem{assumption}[theorem]{Assumption}
\newtheorem{claim}[theorem]{Claim}
\newtheorem{corollary}[theorem]{Corollary}
\newtheorem{definition}[theorem]{Definition}
\newtheorem{example}[theorem]{Example}
\newtheorem{lemma}[theorem]{Lemma}
\newtheorem{proposition}[theorem]{Proposition}
\newtheorem{remark}[theorem]{Remark}
\newcommand{\real}{\mathbb{R}}
\newcommand{\PP}{\mathbb{P}}
\newcommand{\I}{{\mathcal I}}
\newcommand{\cov}{{\textrm{cov}}}
\newcommand{\calC}{{\mathcal C}}
\newcommand{\calF}{{\mathcal F}}
\newcommand{\calH}{{\mathcal H}}
\newcommand{\normal}{{\mathcal N}}
\def\eps{\epsilon}
\newcommand{\bbq}{{\mathbb Q}}
\newcommand{\bbw}{{\mathbb W}}
\newcommand{\bbqun}{{\mathbb Q}_{u,n}}
\newcommand{\htwohat}{{\widehat{\mathcal H}_2}}
\newcommand{\phiyu}{\Phi(y,u)}
\newcommand{\bbqon}{{\mathbb Q}_{u_0,n}}
\begin{document}
\begin{abstract}
We prove a general lemma for deriving contraction rates for linear inverse problems with 
non parametric nonconjugate priors. We then apply it to get contraction rates for both mildly 
and severely ill posed linear inverse problems with Gaussian priors in non conjugate cases. In the 
severely illposed case, our rates match the minimax rates using scalable priors with scales which 
do not depend upon the smoothness of true solution. In the mildly illposed case, our rates match 
the minimax rates using scalable priors when the true solution is not too smooth.

Further, using the lemma, we find contraction rates for inversion of a semilinear operator with 
Gaussian priors. We find the contraction rates for a compactly supported prior. We also discuss 
the minimax rates applicable to our examples when the Sobolev balls in which the 
true solution lies, are different from the usual Sobolev balls defined via the basis of forward operator.

\end{abstract}
\maketitle

\section{Introduction} \label{sec-intro}

Inverse problems can always be formulated as solutions of equations of
the type $y = G(u)$ where $y$ is known (typically some measurement or
data from an experiment) and one is interested in solving the equation to find $u$
\cite[Chapter~1]{Kirsch}. The model which we shall investigate here is
the statistical version of infinite dimensional linear inverse
problems of this type. Specifically, we assume that $G$ is a linear,
injective operator from Hilbert space $\calH_1$ to Hilbert space
$\calH_2$ (both infinite dimensional). We treat $u$ as a random
variable on $\calH_1$ and define $y$ by the following equation:
\begin{equation} \label{eqn:begin}
  y = G(u) + \frac{1}{\sqrt{n}}\eta \,,
\end{equation}
where $\eta$ is a Gaussian noise on $\calH_2$: $\eta \sim
\normal(0,\zeta)$.\footnote{ We may choose $\eta$ to be the white
  noise i.e.  $\zeta = \I$. This makes the data space large, because
  the white noise is supported on a much bigger space than
  $\calH_2$. We shall explain white noise with reasonable details in Section
\ref{sec-consistency}. }
In a Bayesian
formulation of the problem, also known as nonparametric (parametric)
Bayesian framework when $\calH_1$ is infinite (finite) dimensional, we
assume that $u$ is distributed according to a measure $\mu_n$ (we assume
that the choice of prior may depend on level of noise) on
$\calH_1$, to be considered as the prior measure. We further assume
that $u$ and $\eta$ are independent. Given the above assumptions,
Bayes' theorem gives the distribution of the conditional random
variable $u$ given $y$, called the posterior distribution, to be
denoted henceforth by $\mu^y_n$. We are interested in the
properties of the posterior in the small-noise limit as $n \to
\infty$.

In many practical applications, the inverse problem is ill-posed, in
the sense that $u$ may not exist for given $y$, or it may not be
unique, or $u$ may not depend continuously on $y$. In such cases, many
methods of regularization are well developed for linear ill-posed
inverse problems but are still being developed for many nonlinear
problems, see, e.g. the book by Kirsch \cite{Kirsch} and references
therein. A related approach in dealing with ill-posed inverse problems
is the statistical approach similar to the one introduced in the first
paragraph. The statistical approach may turn a deterministic ill-posed
inverse problem into a well-posed problem in the sense that, under
certain assumptions on $G$, $\mu_n$ and $\eta$, the posterior distribution 
$\mu_n^y$ for the conditional random variable $u$ given $y$ is continuous with
respect to $y$ even when $G^{-1}$ is not continuous (see for e.g., the
papers \cite{Stuart, Andrew} for details, further discussions and
references).

Thus the main object of interest in the statistical version is the
posterior distribution $\mu^y_n$.  Depending on the context, various
properties and quantities related to the posterior distribution are of
interest. A far from exhaustive list of recent studies includes
credible sets of the posterior \cite{Zanten}; computational methods
and finite dimensional approximations of equation~\eqref{eqn:begin}
and of the posterior $\mu^y_n$ \cite{Kaipio}; convergence of MAP 
(maximum a posteriori) and CM (conditional mean) estimators. For an
extensive bibliography, one may refer to \cite{Matti}.

Another major question in the statistical version is that of
consistency of the posterior measure. Depending on the formulation of
the problem, one expects the posterior measure to approach Dirac delta
supported on the true value of the unknown either as the number of
observations goes to infinity (large data limit), or as the
observations become more accurate (small noise limit). The first
result in large data limit is due to Doob \cite{locker-doob}, and this
limit has been investigated in many different contexts since
then. There has been a recent interest in the small noise limit
\cite{ray, Zanten, Zhang, Knapik}. In the case when $G$ is identity operator, 
then the asymptotics of large data and small noise limits are known to be equivalent 
\cite{brown, cavalier2008}. Consistency can be proved for all linear, injective $G$ 
when the prior is Gaussian using elementary methods, however we have not been 
able to locate a proof in literature. Methods
from the former (large data limit) are exploited in the work of Ray
\cite{ray} to deal with small noise limits and will be used explicitly
by us in this work as well.

We shall focus on the small noise limit. In particular, we will assume
that the data is obtained from a ``true'' unknown element $u_0 \in
\calH_1$ in equation~\eqref{eqn:begin}. We will be concerned with the
rate of contraction of the Bayesian posterior distribution to the true
solution $u_0$ in an appropriate way to be described
later. Essentially we look for the posterior measure $\mu^y_n$ of the
complements of small balls around $u_0$, that is, $\mu^y_n\{u: \|u -
u_0\|\ > \xi_n\}$. Note that this is a random variable in $y$. We then
study the convergence of this random variable to $0$ as $n \rightarrow
\infty$. In case we are able to prove such a convergence, $\xi_n$ is called
a {\bf contraction rate}. 


The novelty of this work, compared to the previous studies \cite{ray, Zanten, Zhang, Knapik, agapiou}, is to 
extend the class of models, mainly priors, but also
the class of distributions of noise $\eta$ for which we can
obtain contraction rates.
The details of the technical assumptions
needed for our results are given in Section~\ref{sec:main-lem}. 
We also give necessary and sufficient conditions for well-posedness of linear Bayesian models in Banach spaces (That is, model \ref{eqn:begin} with $\mathcal{H}_i$ being Banach spaces).  

In Section~\ref{subsec-setup}, we shall begin by presenting the basic
set-up, followed in Section~\ref{subsec-smallnoise} by the precise
definitions for consistency and contraction rates. 
We discuss our main contributions and relations to previous work in detail in
Section~\ref{subsec-discuss}. Our main
results on contraction rates, Lemma~\ref{lem:contraction}, as well as
Lemma~\ref{corollary1} and \ref{lemma r infinite}, which are
extensions of, and use the methods from the work of Ray \cite{ray}, are
stated and proved in Section~\ref{sec:main-lem}. Finally, in
Section~\ref{sec-examples}, we discuss different classes of examples, 
one of which is {\it semilinear}, where we can apply our results.

\section{Consistency and contraction rates} \label{sec-consistency}


In this section, we first introduce the basic setup and the Bayes'
theorem in the context of our problem, followed by the definitions of
consistency and contraction rates Section~\ref{subsec-smallnoise}. 
Heuristically, consistency implies that the random posterior measure
concentrates around the true solution as the observations get more
precise by way of the observational noise converging to zero in an
appropriate way. In a similar fashion, Contraction rates measure how quickly 
the above said concentration happens.


\subsection{Basic setup} \label{subsec-setup}

As discussed in Section~\ref{sec-intro}, we will
consider statistical versions of linear inverse problem, as given in
model \eqref{eqn:begin}. We will treat $u$ and $y$ as
random variables defined on an abstract probability space $(\Omega,
\calF, \PP)$, taking values in Hilbert spaces $\calH_1$ and $\calH_2$,
equipped with the inner products $\langle\cdot,\cdot\rangle_1$, and
$\langle\cdot,\cdot\rangle_2$ and corresponding norms $\|\cdot\|_1$
and $\|\cdot\|_2$, respectively. Here recall that $y$ is referred to as the
observed data and $u$ as the unknown, or the parameter to be
determined.

We will consider the operator $G: \calH_1 \to \calH_2$ to be linear
and injective. Our first lemma will require that $G$ have a singular
value decomposition, i.e., the eigenvectors of $G^TG$ form a basis of
$\calH_1$ which we denote by $\{e_k\}$ with corresponding eigenvalues
$\{\rho_k^2\}$, i.e., $G^T G e_k = \rho_k^2 e_k$ for $k =
1,2,\ldots$. We will consider ill-posed inverse problems in the sense
that the inverse of $G$ is discontinuous, i.e., $0$ is a limit point
of the eigenvalues $\rho_k^2$. Thus in this case, for any $y \in
G(\calH_1)$, there is a unique $u \in \calH_1$ satisfying $y = G(u)$,
but the dependence of $u$ on $y$ is not continuous. Also note that $y$ may
not even belong to $\calH_2$ or
$G(\calH_1)$ due to the white noise component. Two important cases of
ill-posed inverse problems are as follows \cite{ray,cavalier2008}.
\begin{itemize}
\item The inverse problem is called severely ill-posed if
  \begin{equation}\label{S}
    C_1(1 + k^2)^{-\alpha_1}e^{-C_0 k^{-\beta}} \leq \rho_k \leq
    C_2(1 + k^2)^{-\alpha_2}e^{-C_0 k^{-\beta}} \quad \textrm{as}
    \quad k \to \infty.
    \end{equation}
for some constants $\alpha_1,\alpha_2 \in \real$ and $C_0,C_1,C_2,\beta > 0$.
Essentially in this case, the eigenvalues of $G^TG$ decay
exponentially. An example is the classical inverse problem of
determination of initial condition of the heat equation given the
solution at a fixed later time.\\
\item The problem is called mildly ill-posed if
  \begin{equation}\label{M}
    C_1(1 + k^2)^{-\alpha/2} \leq \rho_k \leq C_2(1 +
    k^2)^{-\alpha/2} \quad \textrm{as} \quad k \to \infty.
  \end{equation}
for some constants $C_1,C_2,\alpha>0$.
\end{itemize}

We will consider the case of observational noise to be Gaussian, i.e.,
$\eta \sim \normal(0,\zeta)$, on $\calH_2$ and also assume that $G(\calH_1)$ is a subset of
$\calH_{\zeta}$, the Cameron-Martin space of the noise.

\begin{remark} Note that
$\normal(0,\zeta)$ is supported on $\calH_2$ if and only if $\zeta$ is
trace class, see \cite{Kuo}. However, covariance operators which are
not trace class are also commonly used. For instance, the covariance
of white noise is the identity which is clearly not trace class.  
In such a case, the noise is not actually supported on $\calH_2$ but on a much
larger space $\htwohat$ which we define below. Let $\{b_i\}$ be any
orthonormal basis of $\calH_2$ and let $R_i$ be the one dimensional
span of $b_i$. Let $\nu_i$ be the standard Gaussian $\normal(0,1)$ on
$R_i$ and define the white noise $\bbw$ on $\calH_2$ to be the product
measure of $\nu_i$ on $\htwohat := \prod R_i$. Even though the above
definition uses the basis $\{b_i\}$, it is easy to check\footnote{If $W$ is a random
variable distributed as $\bbw$, we can check that $\cov(\langle
W,x\rangle_2,\langle W,y\rangle_2) = \left\langle x,y\right\rangle_2$ (see \cite{Kuo}).}
that the measure $\bbw$ is independent
of the choice of the basis $\{b_i\}$.
\end{remark}

Denote by $\bbq_{0,n}$ the measure of the noise $\eta/\sqrt{n}$ and by
$\bbqun$ the measure of $y$ given $u$, i.e., measure of the scaled
noise $\eta/\sqrt{n}$ shifted by $G(u)$ for any fixed $u \in
\calH_1$. The above assumption that $G(u)$ belongs to the
Cameron-Martin space of $\bbq_{0,n}$ implies that $\bbqun \ll
\bbq_{0,n}$. Indeed, the Radon-Nikodym derivative is given by the
Cameron-Martin theorem \cite{Kuo}:
\begin{align}
  \frac{d\bbqun}{d\bbq_{0,n}} = \exp\left( - \phiyu \right) \,,
\label{eqn:rnderiv} \end{align}
where
\begin{equation}
  \phiyu := \frac{n}{2} \langle G(u), G(u) \rangle_{\zeta} - n \langle
  y, G(u) \rangle _{\zeta}\,, \label{eqn:phiyu}
\end{equation}
where $\langle x , y \rangle_{\zeta} = \langle \zeta^{-1/2}x,
\zeta^{-1/2}y \rangle_2$ is the Cameron-Martin inner product. Note that existence of $\phiyu$ 
assumes that the map $\zeta^{-1/2}G(u) \in \mathcal{H}_2$ for $u$ almost surely in prior measure.

Following the Bayesian philosophy, we will assume $u$ to be
$\calH_1$ valued random variable independent of $\eta$. The
distribution for $u$ will be interpreted as the \emph{prior} for $u$,
and will be denoted by $\mu_n$. We will be interested in the
properties of conditional distribution of $u$ given $y$, as given by the Bayes' theorem.

We will now describe the posterior measure and show that it is well defined. The following proposition is a generalization of Theorem 4.1 in \cite{Stuart} as it does not depend on any assumptions on the likelihood $\phiyu$ other than its existence.

\begin{claim} \label{zfinite1} Assume that the random variable $u$ is
  independent of the noise $\eta$. Further, assume that
  $\zeta^{-1/2}G(u) \in \calH_2$ for almost all $u$ with respect to the prior measure. 
  Then, the conditional
  distribution, denoted by $\mu_n^y$, for the random variable $u | y$
  is well-defined and also absolutely continuous with respect to the
  prior $\mu_n$ for $\bbq_{0,n}$-a.e.~$y$. Indeed, the posterior
  measure of any Borel set $B \subseteq \calH_1$ is given by
  \begin{align}
    \mu^y_n(B) = \frac{1}{Z^y_n} \int_B \exp\left( - \phiyu \right)
    d\mu_n(u) \,, \label{eqn:muyb}
  \end{align}
  where $\phiyu$ is defined in equation~\eqref{eqn:phiyu} and the
  constant
  \begin{align}
    Z^y_n := \int_{\calH_1} \exp\left( - \phiyu \right) d\mu_n(u)
  \end{align}
  is finite:
  \begin{align} \label{eq:Zfin}
    0 < Z^y_n < \infty \,.
  \end{align}
\end{claim}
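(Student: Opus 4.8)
The plan is to verify the three assertions of the claim in order: (i) well-definedness of the conditional law $\mu_n^y$ via disintegration, (ii) the explicit formula \eqref{eqn:muyb} for $\mu_n^y(B)$, and (iii) the two-sided bound \eqref{eq:Zfin} on the normalizing constant $Z_n^y$. The backbone is the joint law of $(u,y)$ on $\calH_1\times \htwohat$: since $u\sim\mu_0$ and $\eta$ is independent of $u$, the joint law is $\mu_0\otimes\bbq_{u,n}$ fibered over $u$, and the assumption $\zeta^{-1/2}G(u)\in\calH_2$ for every $u$ means $G(u)$ lies in the Cameron--Martin space of the noise, so that $\bbq_{u,n}\ll\bbq_{0,n}$ with Radon--Nikodym derivative $\exp(-\phiyu)$ as recorded in \eqref{eqn:rnderiv}--\eqref{eqn:phiyu}. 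Hence the joint law of $(u,y)$ is absolutely continuous with respect to $\mu_0\otimes\bbq_{0,n}$ with density $\exp(-\phiyu)$; this product reference measure is the key device, exactly as in \cite{Stuart, Andrew}.

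First I would write, for any bounded measurable $f$ on $\calH_1$ and $g$ on $\htwohat$,
\begin{align}
  \E\big[f(u)g(y)\big] = \int_{\calH_1}\!\!\int_{\htwohat} f(u)g(y)\,\exp(-\phiyu)\,d\bbq_{0,n}(y)\,d\mu_0(u),
\end{align}
using Fubini (justified since the integrand is nonnegative after replacing $f,g$ by their absolute values, and bounded times the integrable density otherwise). Dividing and multiplying by $Z_n^y := \int_{\calH_1}\exp(-\phiyu)\,d\mu_0(u)$ inside the $y$-integral exhibits the measure in \eqref{eqn:muyb} as a regular conditional distribution: for every bounded measurable $f$,
\begin{align}
  \E\big[f(u)\,\big|\,y\big] = \frac{1}{Z_n^y}\int_{\calH_1} f(u)\,\exp(-\phiyu)\,d\mu_0(u)\qquad \bbq_{0,n}\text{-a.e. }y,
\end{align}
which is precisely $\int f\,d\mu_n^y$ with $\mu_n^y$ given by \eqref{eqn:muyb}; absolute continuity $\mu_n^y\ll\mu_0$ is then immediate from the formula. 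This step is essentially the abstract Bayes' theorem in function space and is routine once the product reference measure is in place.

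The main obstacle is the two-sided bound \eqref{eq:Zfin}, and in particular establishing $Z_n^y<\infty$ and $Z_n^y>0$ for $\bbq_{0,n}$-almost every $y$ (not merely in expectation). For the upper bound, note $-\phiyu = n\langle y, G(u)\rangle_\zeta - \tfrac{n}{2}\|G(u)\|_\zeta^2 \le n\langle y,G(u)\rangle_\zeta$; the difficulty is that $y\mapsto\langle y,G(u)\rangle_\zeta$ is a fixed Gaussian linear functional under $\bbq_{0,n}$ (with variance $\tfrac1n\|G(u)\|_\zeta^2$), so one controls $\E_{\bbq_{0,n}}\exp(n\langle y,G(u)\rangle_\zeta) = \exp(\tfrac{n}{2}\|G(u)\|_\zeta^2)$ and then integrates over $u$ — but this requires $\int_{\calH_1}\exp(\tfrac{n}{2}\|G(u)\|_\zeta^2)\,d\mu_0(u)<\infty$, i.e.\ a Fernique-type integrability, to conclude $\E_{\bbq_{0,n}} Z_n^y<\infty$ and hence $Z_n^y<\infty$ a.e.; I would either invoke this as an additional mild hypothesis (as is standard) or use a truncation/continuity argument to get finiteness $\bbq_{0,n}$-a.e.\ without the exponential moment, since for $\bbq_{0,n}$-a.e.\ fixed $y$ the functional $u\mapsto\phiyu$ is lower semicontinuous and bounded below on bounded sets. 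The lower bound $Z_n^y>0$ is the cleaner half: $\exp(-\phiyu)>0$ pointwise, so $Z_n^y=0$ would force $\exp(-\phiyu)=0$ $\mu_0$-a.e., a contradiction; more quantitatively, restrict the integral to a ball $\{\|u\|_1\le M\}$ of positive $\mu_0$-measure (such $M$ exists since $\mu_0$ is a probability measure) on which $\phiyu$ is bounded above for a.e.\ $y$, giving $Z_n^y\ge \mu_0(\|u\|_1\le M)\,e^{-\sup_{\|u\|_1\le M}\phiyu}>0$. Assembling the a.e.\ validity of both bounds completes \eqref{eq:Zfin} and hence the proof.
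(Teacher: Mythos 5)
Your disintegration step (joint density $\exp(-\phiyu)$ with respect to $\mu_0\otimes\bbq_{0,n}$, then abstract Bayes) is exactly the paper's route, and your qualitative argument for $Z^y_n>0$ is fine. The genuine gap is in your upper bound. By dropping the compensator $-\tfrac n2\|G(u)\|_\zeta^2$ before integrating in $y$, you are led to demand $\int_{\calH_1}\exp\bigl(\tfrac n2\|G(u)\|_\zeta^2\bigr)\,d\mu_0(u)<\infty$, an exponential-moment hypothesis that is not part of the claim and is not mild: by Fernique, a Gaussian prior admits such a moment only for small multipliers, so with the factor $n/2$ it typically fails for large $n$ in precisely the Gaussian-prior examples the paper treats. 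Your fallback is also unsound: when $\zeta$ is not trace class the data live in $\htwohat$, and for fixed $y$ the map $u\mapsto\langle y,G(u)\rangle_\zeta$ is only an a.e.-defined measurable (Paley--Wiener) pairing, not a continuous functional of $u$; hence ``$\phiyu$ is lower semicontinuous and bounded below on bounded sets'' and the quantity $\sup_{\|u\|_1\le M}\phiyu$ are not available (the same objection applies to the ``quantitative'' version of your lower bound, though the pointwise-positivity version survives).

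The missing idea, which is what the paper does, is to keep the full density and integrate in $y$ first: $\exp(-\phiyu)$ is precisely $d\bbqun/d\bbq_{0,n}$, so by Tonelli and the Cameron--Martin change of variables, for every measurable $S$,
\begin{align*}
\int_S Z^y_n\,d\bbq_{0,n}(y)\;=\;\int_{\calH_1}\left(\int_S\exp(-\phiyu)\,d\bbq_{0,n}(y)\right)d\mu_0(u)\;=\;\int_{\calH_1}\bbqun(S)\,d\mu_0(u)\;\le\;1 .
\end{align*}
Taking $S=\htwohat$ gives $\E_{\bbq_{0,n}}[Z^y_n]=1$, hence $Z^y_n<\infty$ for $\bbq_{0,n}$-a.e.\ $y$ with no additional hypothesis on $\mu_0$; and since $\bbqun$ and $\bbq_{0,n}$ are equivalent, the same identity gives $\int_S Z^y_n\,d\bbq_{0,n}>0$ for every $S$ of positive $\bbq_{0,n}$-measure, so $Z^y_n>0$ a.e.\ as well. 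Replacing your exponential-moment/truncation step by this computation closes the gap.
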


\begin{proof}
Due to the independence of $u$ and $\eta$, the joint distribution of
  $(u,\eta)$ is given by $\mu_n \otimes \bbq_{0,n}$. Using
  equation~\eqref{eqn:begin} and the definition of $\bbqun$ above, we
  see that the distribution of the random variable $(u,y)$ is $\mu_n
  \otimes \bbqun$ which is absolutely continuous
  w.r.t.~$\mu_n \otimes \bbq_{0,n}$ with the same Radon-Nikodym
  derivative as the RHS of equation~\eqref{eqn:rnderiv}. Thus by
  Bayes' theorem, the conditional distribution of $u$ given $y$,
  denoted by $\mu^y_n$, is absolutely continuous with respect to
  $\mu_n$ with the same Radon-Nikodym derivative, which proves
  equation~\eqref{eqn:muyb}, as long as the constant $Z^y_n$ is finite
  but non-zero.

  We now show $0 < Z^y_n < \infty$ for $\bbq_{0,n}$-a.e.~$y$. It would
  suffice to show $0 < \int_SZ^y_n d\bbq_{0,n} < \infty$ for all sets
  of positive measure $S$ (under $\bbq_{0,n}$).
  \begin{eqnarray*}
    \int_SZ^y_n d\bbq_{0,n} &=& \int_S \left(\int_{\calH_1} \exp\left(
        - \phiyu \right) d\mu_n\right) d\bbq_{0,n}\\
    &=& \int_{\calH_1}\left(\int_S\exp\left( - \phiyu
      \right)d\bbq_{0,n}\right)d\mu_n\\
    &=& \int_{\calH_1} \bbq_{u,n}(S) d\mu_n\\
  \end{eqnarray*}
  We have used Tonelli theorem and the change of variable formula for
  translation of Gaussian measures in the above. Since $\bbq_{0,n}$
  and $\bbq_{u,n}$ are absolutely continuous with respect to each
  other for almost all $u$, we have $0 < \int \bbq_{u,n}(S) d\mu_n < \infty$
  and hence $0 < \int_SZ^y_n d\bbq_{0,n} < \infty$. Therefore the
  posterior distribution exists and has the density given by equation
  \eqref{eqn:muyb}.
\end{proof}

The above computation proves that the posterior $\mu^y_n$ is well
defined for $\bbq_{0,n}$ a.e. $y$. We also note here that the proof
solely relies on the existence of $\phiyu$, which is a consequence of
the assumption that $\zeta^{-1/2}G(u) \in \calH_2$ for almost all $u$ with 
respect to the prior measure.

\subsection{Definition of contraction rates} \label{subsec-smallnoise}

Posterior distribution, as seen above, can indeed be considered as a {\it solution} 
to a statistical inverse problem. However, such proposed solution needs to be tested
for some obvious flaws. Heuristically, assuming that the observation $y$ corresponds
a specific {\it true solution} $u_0\in\calH_1$, the posterior distribution should {\it concentrate} 
around the true solution $u_0$ as the intensity of noise decreases to zero.

The main idea is to estimate the posterior measure of complements of neighborhoods 
of the true solution, and show that they converge to zero. 

In particular, we will define 
$f^y_{n,\xi} := \mu_n^y \{\, u \mid \left\| u - u_0\right\| > \xi \,\}$ 
which is a random variable that is defined for each $y$ in
the support of $\bbq_{u_o,n}$. The consistency of the statistical
inverse problem is then defined by convergence in probability of
random variables $f^y_{n,\xi}$.

\begin{definition} \label{def-consist} The sequence of posteriors
  $\mu_n^y$ is said to be consistent if $\bbqon \{\, y \mid
  f^y_{n,\xi} > \delta \,\} \rightarrow 0$ as $n \rightarrow \infty$
  for every $\delta > 0$.
\end{definition}
In fact, the above convergence may be true even when we replace $\xi$
with a sequence $\xi_n \to 0$, which essentially defines the
contraction rate as follows.
\begin{definition}\label{def-contraction}
  A sequence $\xi_n \to 0$  is said to be a rate of contraction for the
  sequence of posterior measures $\mu_n^y$ if $\bbqon \{\, y \mid
  f^y_{n,\xi_n} > \delta \,\} \rightarrow 0$ as $n \rightarrow \infty$
  for every $\delta > 0$.
\end{definition}

 As mentioned earlier in the introduction, we prove a general lemma for
deriving contraction rates for linear inverse problems. We also compare our rates with minimax rates. 
A description of minimax rates can be found in \cite{Belister, cavalier2008}. Before we state and 
prove the lemma in Section~\ref{sec:main-lem}, we discuss relation of our results to previous work.

\subsection{Definition of well-posedness}\label{def-well-posedness}

The concept of well-posedness captures the fact that the posterior varies continuously with observation. 
In order to formalise the concept, we need appropriate metrics on the space of posteriors as well as observations. 
It is standard to use Hellinger distance as a metric on the space of posteriors. We use the definition given in \cite{Stuart}.
\begin{definition}
Given two probability measures $\mu$, $\mu'$ and a measure $\nu$ such that $\mu$ and $\mu'$ 
have densities with respect to $\nu$, the Hellinger distance $d_{H}(\mu,\mu')$ is defined as 
$$d_{H}(\mu,\mu') \equiv \sqrt{\left(\frac{1}{2}\int \left(\sqrt{\frac{d\mu}{d\nu}} - \sqrt{\frac{d\mu'}{d\nu}} \right)^2 d\nu\right)}$$
\end{definition}
\begin{definition}
The posterior model  $y \to \mu^y$ is said to be well posed if there exists a Banach space $(Y, \|.\|_Y)$ such that $y \in Y$ almost surely(note that $y$ is the sum of random variables $G(u)$ and $\frac{1}{\sqrt{n}}\eta$) and $\mu^y$ is a continuous function of $y$ with respect to the corresponding metrics
\end{definition}

\subsection{Our contribution and relation to previous work} \label{subsec-discuss}
Well-posedness of the posterior for inverse problems on Banach spaces has been discussed 
in \cite{Stuart} in a very general setting. It provides certain technical assumptions which are 
sufficient to show well-posedness. However, it can be cumbersome to show that the 
assumptions hold even in simple cases as can be seen in \cite{agapiou}. 

We give necessary and sufficient conditions for the posterior to be well-posed when the model \ref{eqn:begin} is defined on Banach spaces \ref{well-posedness}.

In the finite dimensional setup, a nondegenerate linear $G$ ensures well-posedness of the inverse problem and makes the problem of finding contraction rates easy. The same clearly does not hold for infinite dimensional case, which has received considerable attention only recently. We shall now outline
some of these studies of contraction rates in infinite dimensions,
pointing out the relations to our main result.

In the papers \cite{Zanten} and \cite{Zhang, Knapik}, the authors deal
with mildly ill-posed \eqref{M} and severely ill-posed
\eqref{S} inverse problems, respectively. Both these
studies use scalable Gaussian priors and white noise. They first calculate
the mean and covariance operator of the posterior distribution (which
is Gaussian as well) and find a bound for $f_{n,\xi}^y$ defined above,
using Markov inequality and then take the expectation of the resulting
random variable. At this point, they use the assumption that
$\varphi_k = e_k$ where $\{\varphi_k\}$ is the basis of $\calH_1$ with respect to which
the covariance of prior is
diagonalizable (recall that $\{e_k\}$ is the eigenbasis of $G^TG$). Using
this assumption, they show that the expectation of $f_{n,\xi}^y$
equals the sum of a series and the estimate for the value of the sum
provides the contraction rate. In \cite{Vaart}, the authors provide adaptive priors to get contraction rates which match the minimax rates (except for a logarithmic term) when using priors which are diagonalizable in the basis of $G^TG$.

In the paper \cite{agapiou}, the authors again work with Gaussian
priors, but where $\varphi_k = e_k$ may not hold. Further, observational noise may not be white, generalizing some recent results
\cite{florens-simoni2012, florens-simoni2014, Zanten, Knapik}. They
work with the explicit expressions for the mean and covariance of the
Gaussian posterior, obtaining contraction rates for mildly ill-posed
problems. The paper makes use of certain technical assumptions comparing the covariance operators of noise and prior measures to the linear operator $G$.
The paper gets contraction rates only when the true solution $u_0$ lies in the Cameron-martin space of the prior, in particular when $u_0 \in \calH^\gamma$ for $\gamma \ge 1$, 
where $\calH^\gamma$ is the Hilbert scale of order $\gamma$ defined in Section~3 of \cite{agapiou}.

Finally in \cite{ray}, the author discusses cases where the prior
may not be Gaussian and we shall be using methods used in that
paper. The main idea in \cite{ray} is to use test functions
as introduced in some earlier works \cite{Ghosal}. Similar test
functions are defined later in our paper in
Proposition~\ref{prop:test}. A key result, Theorem~2.1 in \cite{ray},
is along the lines of similar results in \cite{gine-nickl2011}, and is
proved under the assumption that 
\begin{equation} \label{condition-ray}
 |\{l \mid \langle \varphi_k ,
e_l \rangle_1 \ne 0 \}| < \infty
\end{equation}  for all $k$, where $\{\varphi_k\}$
is an arbitrary basis of $\calH_1$. However, when considering Gaussian
priors, the conditions required for the main Theorem~2.1 in \cite{ray} are
verified only for case when $\varphi_k = e_k$, which is also the case
discussed in \cite{Zanten, Zhang, Knapik, Vaart}. In conclusion, we would like to say that there is a paucity of results on contraction rates in non-conjugate (non-conjugate is used to mean that $\phi_i \neq e_i$) cases with Gaussian priors. The available contraction rates hold only when the true solution is in the Cameron-Martin space of the prior. Also, there are no results for severely ill posed problems in non conjugate cases with Gaussian priors. The main contribution of this paper is in the aspect which we summarise below.

\subsubsection{Our contribution}\label{contribution}

\begin{itemize}
\item[1)] {\bf Theorem \ref{well-posedness}:} We show that the posterior for model \eqref{eqn:begin} is well-posed for Gaussian priors if and only if $G(u)$ lies in the Cameron-Martin space of the noise almost surely with respect to the prior measure.
\item[2)] {\bf Lemma \ref{lemma r infinite}:} We weaken condition \eqref{condition-ray} of \cite{ray} by demanding only that 
\begin{equation}\label{condition-our}
  (G^{-1})^T\phi_k \in \mathcal{H}_2
\end{equation}  
for all $k$. 
\item[3)] {\bf Subsection \ref{sec:meyer}:} We then verify the conditions imposed by the lemma to examples of mildly illposed problems where $\varphi_k \neq e_k$ for scalable Gaussian priors and achieve minimax rates when true solution does not lie in the cameron martin space of the prior. The rates however, are suboptimal when the true solution lies in the Cameron-Martin space of the prior. Our examples strictly contain the class of examples discussed in the paper \cite{agapiou} (in the sense that we allow for a much larger class of perturbations) and we get contraction rates for true solutions belonging to all Hilbert scales/Sobolev classes (Subsection \ref{sec:G without SVD}). We also get contraction rates for the deconvolution problem using Gaussian priors on Meyer wavelet basis.
\item[4)] {\bf  Subsection \ref{severely ill posed}:} We obtain the minimax rates for severely ill posed problem in non conjugate cases using scalable priors (with scales independent of the smoothness of true solution) for all Sobolev 
classes of true solutions.
\item[5)] {\bf Subsection \ref{sec:semilinear}:} Finally, under appropriate assumptions, we also obtain the 
contraction rates for the posterior when the operator $G$ is a semilinear.
\end{itemize}


\section{Well-posedness and Contraction Lemmas}
\label{sec:main-lem}

In this section, we shall present our main results concerning wellposedness of the posterior, and
contraction rates for various choices of model parameters.

\subsection{Well-posedness}\label{well-posedness}

We shall begin with stating our result on wellposedness of the posterior on separable Hilbert spaces and then use it to show the result on Banach spaces.

\begin{theorem}\label{thm:well-posedness}
Consider the model \eqref{eqn:begin} with Gaussian prior $\mu \equiv N(0,\mathcal{C})$ and noise 
$\eta \sim \nu \equiv N(0,\zeta)$. Then, the posterior measure is well-posed 
and is locally Lipshitz in the observation with respect to the appropriate norm,
if and only if $G(u)$ lies almost surely in the Cameron-Martin space of $\nu$.

\end{theorem}

\begin{proof}
Following the proof of Proposition 3.1 in \cite{Zanten}, we write the posterior explicitly as a 
Gaussian measure $N(Ay,Q)$ with 
\begin{eqnarray*}
A &=& \mathcal{C}G^T\left(\zeta + G\mathcal{C}G^T\right)^{-1}\\
&=& \mathcal{C}G^T\zeta^{-\frac{1}{2}}\left(I + \zeta^{-\frac{1}{2}}G\mathcal{C}G^T\zeta^{-\frac{1}{2}}\right)^{-1}\zeta^{-\frac{1}{2}}
\end{eqnarray*}
and

\begin{eqnarray*}
Q &=& \mathcal{C} - A\left(\zeta + G\mathcal{C}G^T\right)^{-1}A^T\\
&=& \mathcal{C} - \mathcal{C}G^T\left(\zeta + G\mathcal{C}G^T\right)^{-1}G\mathcal{C}\\
&=& \mathcal{C}\left( I - \left(G^{-1}\zeta(G^{T})^{-1} + \mathcal{C}\right)^{-1}\mathcal{C}\right)\\
&=& \mathcal{C}\left( I + G^{T}\zeta G\mathcal{C}\right)^{-1}.\\
\end{eqnarray*}

Note that $Q$ is independent of the observation $y$. Using Fernique's theorem (\cite{Kuo}) 
and bounded convergence theorem, it is easy to see that $d_H(\mu^{y}, \mu^{y'})$ is locally 
Lipshitz with respect to $\|A(y)\|_Q = \|Q^{-\frac{1}{2}}A(y)\|_1$ where $\|\cdot\|_1$ is the usual 
norm on $\mathcal{H}_1$. 
If $G(u)$ lies in the Cameron-Martin space of noise almost surely with respect to the prior, then we have 
$$\int\|G(u)\|_{\zeta}^2 d\mu= \int \|\zeta^{-\frac{1}{2}}G(u)\|^2 d\mu < \infty,$$
where the norm $\|z\|^2_{\zeta} = \|\zeta^{-\frac{1}{2}}\,z\|^2$ is the Cameron-Martin norm of noise.
This implies that the covariance of the pushforward of $\mu$ under $\zeta^{-\frac{1}{2}}G$ is trace class. This implies that $\zeta^{-\frac{1}{2}}G\mathcal{C}G^T\zeta^{-\frac{1}{2}}$ is trace class.
Similar calculations show that if $Ay$ lies in the Cameron-Martin space of the posterior almost surely with respect to the distribution of $y$ (distributed as $G(u) + \eta$, that is, Gaussian with mean $0$ and covariance $\zeta + G\mathcal{C}G^T$), then $Q^{-\frac{1}{2}}A(\zeta + G\mathcal{C}G^T)A^T Q^{-\frac{1}{2}}$ is trace class. Since both the summands are positive, both $Q^{-\frac{1}{2}}A\zeta A^TQ^{-\frac{1}{2}}$ and $Q^{-\frac{1}{2}}AG\mathcal{C}G^TA^TQ^{-\frac{1}{2}}$ need to be trace class. With these facts in hand, we prove the theorem.

\textbf{The if part}

We assume that $\zeta^{-\frac{1}{2}}G\mathcal{C}G^T\zeta^{-\frac{1}{2}}$ is trace class. Well-posedness of posterior will follow if we show that $Q^{-\frac{1}{2}}A$ 
is continuous on a space $Y$ with appropriate norm $\|\cdot\|_Y$ and $y \in Y$ almost surely. 
Note that $G(u)$ lies in Cameron-Martin space of $\nu$ almost surely with respect to $\mu$. 
Hence, $y|u$ is absolutely continuous with respect to $\nu$ for almost all $u$ with respect to $\mu$. 
Thus, it is sufficient to show that $\eta \in Y$ almost surely.

We choose $Y = \zeta (G^T)^{-1}\mathcal{C}^{-\frac{1}{2}}\mathcal{H}_1$, with the norm $\|y\|_Y = \|\mathcal{C}^{\frac{1}{2}}G^T \zeta^{-1}y\|_1$. The random variable $\eta$ belongs to $Y$ almost surely, if $\|\eta\|_Y < \infty$ almost surely. This holds if 
$$\mathbb{E}\|\eta\|_Y^2 = \int \|\mathcal{C}^{\frac{1}{2}}G^T \zeta^{-1}y\|_1^2 d\nu < \infty,$$ 
which, in turn is true whenever the covariance operator of pushforward of $\nu$ under
$\mathcal{C}^{\frac{1}{2}}G^T \zeta^{-1}$, given by 
$\left({C}^{\frac{1}{2}}G^T\zeta^{-\frac{1}{2}}\right)\zeta^{-\frac{1}{2}}G\mathcal{C}^{\frac{1}{2}}$ is trace class.

Noting that $AB$ is trace class if and only if $BA$ is trace class, the above is equivalent to showing that 
$$\zeta^{-\frac{1}{2}}G\mathcal{C}^{\frac{1}{2}}\left({C}^{\frac{1}{2}}G^T\zeta^{-\frac{1}{2}}\right) 
= \zeta^{-\frac{1}{2}}G\mathcal{C}G^T\zeta^{-\frac{1}{2}}$$ 
is trace class. We have seen that this follows by the fact that $G(u)$ lies in the Cameron-Martin space of the noise almost surely with respect to the prior measure. Hence, $\eta \in Y$ almost surely.

Next, we show that $Q^{-\frac{1}{2}}A$ is continuous on $Y$.
For, each $y \in Y$, we have some $u \in \mathcal{H}_1$ such that $y = \zeta (G^T)^{-1}\mathcal{C}^{-\frac{1}{2}}u$ 
and $\|y\|_Y = \|u\|_1$. Thus,  $Q^{-\frac{1}{2}}A$ is continuous on $Y$ if 
$Q^{-\frac{1}{2}}A\zeta (G^T)^{-1}\mathcal{C}^{-\frac{1}{2}}$ is continuous on $\mathcal{H}_1$. Setting 
$\mathcal{C}^{\frac{1}{2}}G^T\zeta^{-\frac{1}{2}} = B$ and $G^T\zeta G = K $, we have
\begin{eqnarray*}
A\zeta (G^T)^{-1}\mathcal{C}^{-\frac{1}{2}} 
&=& \mathcal{C}G^T\zeta^{-\frac{1}{2}}\left(I + \zeta^{-\frac{1}{2}}G\mathcal{C}G^T\zeta^{-\frac{1}{2}}\right)^{-1}\zeta^{\frac{1}{2}} (G^T)^{-1}\mathcal{C}^{-\frac{1}{2}}\\
&=& \mathcal{C}^{\frac{1}{2}}B\left(I + B^TB\right)^{-1}B^{-1}
\end{eqnarray*}
and 
$$Q^{-1} = \mathcal{C}^{-1} + G^T\zeta G = \mathcal{C}^{-1} + K.$$ 
Next, we note that $B$ is compact (since $B^TB$ is trace class)
and the eigenbasis $\{g_i\}$ of $B^TB$ is also the eigenbasis of $T \equiv B\left(I + B^TB\right)^{-1}B^{-1}$. 
Assuming that $\{b_i^2\}$ are the eigenvalues of $B^TB$, the eigenvalues of $T$ are $\left(1 + b_i^2\right)^{-1}$, 
hence $T$ is continuous. Finally, we estimate 
$\|Q^{-\frac{1}{2}}A\zeta (G^T)^{-1}\mathcal{C}^{-\frac{1}{2}}u\|^2 $
Noting that $K$ is continuous, we have
\begin{eqnarray*}
\langle Q^{-1}A\zeta (G^T)^{-1}\mathcal{C}^{-\frac{1}{2}}u, A\zeta (G^T)^{-1}\mathcal{C}^{-\frac{1}{2}}u \rangle 
&=& \langle Tu,Tu \rangle + \langle K\mathcal{C}^{\frac{1}{2}}Tu,\mathcal{C}^{\frac{1}{2}}Tu \rangle\\
&\leq& a\|u\|^2
\end{eqnarray*}
for some $a>0$. Hence, $\|Q^{-\frac{1}{2}}Ay\|_1 \leq a\|y\|_Y^2$, proving the statement of theorem.

\textbf{The only if part}

We need to show that $\zeta^{-\frac{1}{2}}G\mathcal{C}G^T\zeta^{-\frac{1}{2}}$ is trace class, or equivalently, $\zeta^{-\frac{1}{2}}G\mathcal{C}^{\frac{1}{2}} = B^T$ is Hilbert-Schmidt. Well-posedness of the posterior implies that $Ay$ lies in the Cameron-Martin space of the posterior almost surely with respect to the distribution of $y$. This implies that $Q^{-\frac{1}{2}}A\zeta A^TQ^{-\frac{1}{2}}$ and $Q^{-\frac{1}{2}}AG\mathcal{C}G^TA^TQ^{-\frac{1}{2}}$ are trace class. In particular, this implies that $Q^{-\frac{1}{2}}AG\mathcal{C}^{\frac{1}{2}}$ and $fillin$ are Hilbert-Schmidt. Further, note that $Q^{-1}$ is bounded below since $\mathcal{C}^{-1}$ is bounded below and $\mathcal{C}^{-1}$ and $G^T\zeta G$ are positive operators. Thus, $Q^{-\frac{1}{2}}$ is bounded below making $AG\mathcal{C}^{\frac{1}{2}}$ Hilbert-Schmidt. Next, recalling that $Q^{-1} = \mathcal{C}^{-1} + G^T\zeta G$, we have $$\sum \langle Q^{-\frac{1}{2}}AG\mathcal{C}^{\frac{1}{2}}e_i,Q^{-\frac{1}{2}}AG\mathcal{C}^{\frac{1}{2}}e_i\rangle = \sum \langle (\mathcal{C}^{-1} + G^T\zeta G)AG\mathcal{C}^{\frac{1}{2}}e_i,AG\mathcal{C}^{\frac{1}{2}}e_i\rangle < \infty.$$
Since $\zeta^{\frac{1}{2}}GAG\mathcal{C}^{\frac{1}{2}}$ is Hilbert-Schmidt ($AG\mathcal{C}^{\frac{1}{2}}$ is Hilbert-Schmidt and $\zeta^{\frac{1}{2}}G$ is continuous), we have after explicitly writing out $A$, $$\sum \langle \mathcal{C}^{-1}AG\mathcal{C}^{\frac{1}{2}}e_i,AG\mathcal{C}^{\frac{1}{2}}e_i\rangle = \sum  \|\mathcal{C}^{\frac{1}{2}}G^T \zeta^{-\frac{1}{2}}\left(I + \zeta^{-\frac{1}{2}}G\mathcal{C}G^T \zeta^{-\frac{1}{2}}\right)^{-1}\zeta^{-\frac{1}{2}}G\mathcal{C}^{\frac{1}{2}}e_i\|^2 < \infty.$$
Hence, $B(I + B^TB)^{-1}B^T$ is Hilbert-Schmidt. Let  $B(I + B^TB)^{-1}B^T$ have eigenbasis $\{f_i\}$. It is easy to see that $B^TB$ is Hilbert-Schmidt and has eigenbasis $\{f_i\}$ as well. 

Now, we use the fact that $Q^{-\frac{1}{2}}A\zeta^{\frac{1}{2}}$ is Hilbert-Schmidt to show that $\zeta^{-\frac{1}{2}}G\mathcal{C}^{\frac{1}{2}}$ is Hilbert-Schmidt. By arguments used before, it follows that $A\zeta^{\frac{1}{2}}$ is also Hilbert-Schmidt. After opening up $Q^{-1}$, we have
$$\sum \langle Q^{-\frac{1}{2}}A\zeta^{\frac{1}{2}}e_i, Q^{-\frac{1}{2}}A\zeta^{\frac{1}{2}} e_i \rangle = \sum \langle (\mathcal{C}^{-1} + G^T\zeta G )A\zeta^{\frac{1}{2}}e_i, A\zeta^{\frac{1}{2}} e_i \rangle < \infty.$$
As before, noting that $\zeta^{\frac{1}{2}}GA\zeta^{\frac{1}{2}}$ is Hilbert-Schmidt and opening up $A$, we have $$\langle \mathcal{C}^{-1}A\zeta^{\frac{1}{2}}e_i, A\zeta^{\frac{1}{2}} e_i \rangle = \|\mathcal{C}^{\frac{1}{2}}G^T\zeta^{-\frac{1}{2}}\left(I + \zeta^{-\frac{1}{2}}G\mathcal{C}G^T \zeta^{-\frac{1}{2}}\right)^{-1} e_i\|^2 < \infty.$$ 
Hence, $B(I + B^TB)^{-1}$ is Hilbert-Schmidt. Assuming eigenvalues of $B^TB$ are $\{s_i^2\}$, we have $$\sum \|B(I + B^TB)^{-1}f_i\|^2 = \sum \left(\frac{s_i}{1+ s_i^2}\right)^2 < \infty.$$
$\sum \left(\frac{s_i}{1+ s_i^2}\right)^2 < \infty$ along with $s_i^2 \to 0$ (since $B^TB$ is compact) imply that $\sum s_i^2 < \infty$ making $B^TB$ trace class and hence $B^T$ Hilbert-Schmidt.
\end{proof}

Now, to show the result for Banach spaces, we shall embed the Banach spaces into appropriate Hilbert spaces and show that the well-posedness result on the Hilbert spaces imply the well-posedness result on the Banach spaces.
We rewrite model \ref{eqn:begin} in Banach space.
\begin{equation} \label{eqn:banach}
  y = G(u) + \frac{1}{\sqrt{n}}\eta \,,
\end{equation}
where $\eta$ is a Gaussian noise on $\calH_2$: $\eta \sim
\normal(0,\zeta)$. $G : W_1 \to W_2$ where $W_i$ are Banach spaces. $\mu$ is the prior on $W$. If there exists a Hilbert space $\mathcal{H}_1$ such that $W_1 \subset \mathcal{H}_1$, then we can push forward the prior $\mu$ via the inclusion map to get a measure $N(0, \mathcal{C}')$ on $\mathcal{H}_1$. Similarly, if there exists a Hilbert space $\mathcal{H}_2$ such that $W_2 \subset \mathcal{H}_2$ then we can rewrite model \ref{eqn:banach} as model \ref{eqn:begin} with minor changes. $G$ is a linear map on $\mathcal{H}_1$ which is defined almost everywhere with respect to the measure $N(0, \mathcal{C}')$. The noise stays the same as before. If we now write the posterior for this model prior, we get the same expression $N(Ay, Q)$ for the posterior and hence the well-posedness of model \ref{eqn:begin} implies the well-posedness of \ref{eqn:banach}. The only thing left to show is that given any Banach space $W$, we can find a Hilbert space $\mathcal{H}$ such that $W \subset \mathcal{H}$.

\begin{lemma}
Given a Banach space $W$, we can construct a separable Hilbert space $\mathcal{H}$ such that $W \subset \mathcal{H}$.
\end{lemma}
\begin{proof}
Since $W$ is separable, we can find a countable collection of linear functionals $f_i \in W^{*}$ with norm $1$ such that $\|x\|_W = sup_{i}|f_i(x)|$ for all $x \in W$. Define $$\langle x , y \rangle_{\mathcal{H}} = \sum_i \frac{f_i(x)f_i(y)}{i^2}.$$
Also, the $\|\|_{\mathcal{H}}$ norm is smaller than the $\|\|_{W}$ norm since $$\sum_i \frac{f_i(x)f_i(x)}{i^2} \leq sup_{i}|f_i(x)|^2 (\sum \frac{1}{n^2}) \leq K\|x\|_{W}.$$
Completing $W$ under the $\|\|_{\mathcal{H}}$ norm gives us the desired Hilbert space $\mathcal{H}$.
\end{proof}
This achieves the first part in Section \ref{contribution}. As an application, we shall discuss the well-posedness 
of posteriors for a certain class  of examples which strictly contains the examples discussed in \cite{agapiou}.

\begin{example}\label{ex:mild}
Consider the model \eqref{eqn:begin}, with the prior $\mu \equiv N(0,\mathcal{C})$. 
Let the operator $G = (\mathcal{C}^{-l} + K_1)^{-1}$ and noise 
$\eta \sim (\mathcal{C}^{-\frac{\beta}{2}} + K_2)^{-2}$ for some $K_1,K_2$ continuous, 
self adjoint, positive operators such that $\mathcal{C}^{s_0}$ is trace class for some 
$0 < s_0 < 1$ and $2l - \beta + 1 > s_0$. Assume further that 
$\mathcal{C}^{l - \frac{\beta}{2}}K_1$ is continuous.
\end{example}

\begin{claim}
The posterior for the above example is well-posed.
\end{claim}
\begin{proof}
We need to show that $\zeta^{-\frac{1}{2}}G(u) \in \mathcal{H}_2$ for almost all $u$ 
with respect to the prior measure. Following the previous proof, we can show that the 
above holds if $\zeta^{-\frac{1}{2}}G\mathcal{C}^s$ is continuous for some $s < \frac{1-s_0}{2}$. 
Hence, it is sufficient to show that $\zeta^{-\frac{1}{2}}G\mathcal{C}^{\frac{\beta}{2} - l}$ 
is continuous. Noting that $(I + K_2\mathcal{C}^{\frac{\beta}{2}})$ and 
$\left(I + \mathcal{C}^{l - \frac{\beta}{2}}K_1\mathcal{C}^{\frac{\beta}{2}}\right)^{-1}$ 
are continuous (by first part of theorem \ref{thm:functional} by putting 
$G_1 = \mathcal{C}^{l - \beta}$ and $K = \mathcal{C}^{\frac{\beta}{2}}K_1\mathcal{C}^{\frac{\beta}{2}}$), 
we have that 
$$\zeta^{-\frac{1}{2}}G\mathcal{C}^{\frac{\beta}{2} - l} 
= (I + K_2\mathcal{C}^{\frac{\beta}{2}})\left(I + \mathcal{C}^{l - \frac{\beta}{2}}K_1\mathcal{C}^{\frac{\beta}{2}}\right)^{-1}$$ 
is continuous. Hence, we have proved the claim.
\end{proof}

We note here that in the context of Example 8.3 in \cite{agapiou}, we allow for a larger class of perturbations
(class of $K_1$ and $K_2$ we can choose) both in noise measure and operator to be inverted.

\subsection{Contraction lemmas}
\label{contraction}

In this section, we shall focus on providing tight conditions which enable 
us in computing (almost) optimal contraction rates.
Each lemma in this section demands different
conditions, which are closely related to one another. Depending on the
kind of prior, one may find it easier or difficult to verify some of
these conditions and this may dictate which form of the lemma to
use. We shall present the proof only for the first contraction lemma,
and outline the proofs for the other two.

It has been noted by Knapik etal. \cite{Zanten} that it is possible to improve the contraction 
rates by choosing different priors $\mu_n$ for different noise
levels $n$. We shall also adopt the same method in our quest for better contraction rates in our setting.
All the priors $\mu_n$ however, shall be defined
using the same basis $\{\phi_i\}$. As described in Section
\ref{subsec-smallnoise}, contraction rate for
a posterior measure is a way of quantifying concentration of the
posterior measure around the true value.  However, such concentration
phenomenon is not exhibited by the posterior if the prior distribution does not put
enough probability mass around the true value. This can be avoided
with the following assumption:
\begin{assumption}\label{assum1}
  Assume that there exist a sequence of real numbers $\{\eps_n\}_{n\ge
    1}$ such that $\eps_n \to 0$ and $n\eps_n^2 \to\infty$, such that
  \begin{equation}\label{eqn:fwd-ball}
    \mu_n \{ u : \left\|G(u) - G(u_0)\right\|_{\zeta} \leq \epsilon_n
    \} \geq e^{-Cn\epsilon_n^2}\ \ \ \ \ \ \textrm{ for some } C>0,
  \end{equation}
where $\|\cdot\|_{\zeta}$ is the Cameron-Martin norm of the noise as defined
in \eqref{eqn:phiyu}, and $u_0$ is the true value.
\end{assumption}

Additionally, we also need to ensure that, with high probability (under the prior measure), elements in $\calH_1$ are well approximated
by the finite dimensional projections. This is made precise in the
following.
\begin{assumption}\label{assum2}
  Assume that $G$ admits a singular value decomposition, and that $G^TG$ has the eigenpair
  $\{e_k,\rho_k^2\}$. Further, assume that there exists a sequence of
  real constants $\xi_n > 0$, sequences of positive integers $k_n$,
  $r_n$ and a basis $\{\varphi_k\}_{k\ge 1}$ of $\calH_1$ satisfying
  \begin{itemize}
  \item $r_n\to\infty$ as $n\to\infty$;
  \item $k_n < Rn\eps_n^2$ for some $R>0$, where $\eps_n$ are as
    defined in the previous assumption;
  \item $\max\{\xi_n, k_n^{-1}\} \rightarrow 0$ as $n\to\infty$, and
  \item writing $P^{\varphi}_{k_n}$ and $P^{e}_{r_n}$ as the
    projections onto the subspace spanned by
    $\{\varphi_1,\ldots,\varphi_{k_n}\}$ and $\{e_1,\ldots, e_{r_n}\}$
    respectively, we need
    \begin{equation}\label{eqn:pr-proj}
      \mu_n \{u : \left\|P^{\varphi}_{k_n}P^e_{r_n}(u) - u\right\|_1 >
      C_2 \xi_n\} \leq e^{-(C + 4)n\epsilon_n^2}
    \end{equation}
    for some $C_2>0$, and the same $C$ as in Assumption \ref{assum1}.
  \end{itemize}
\end{assumption}

\begin{remark}
  Note that a sufficient condition to check the
  inequality in \eqref{eqn:pr-proj} is
  \begin{equation}\label{eqn:pr-proj1}
    \mu_n \left\{u : \left\|P^{\varphi}_{k_n}(u) - u\right\|_1 >
      \frac{C_2}{2} \xi_n\right\} \leq \frac{1}{2}e^{-(C +
      4)n\epsilon_n^2}
  \end{equation}
  and
  \begin{equation}\label{eqn:pr-proj2}
    \mu_n \left\{u : \left\|P^{\varphi}_{k_n} \left[ P^e_{r_n}(u) -
          u\right] \right\|_1 > \frac{C_2}{2} \xi_n\right\} \leq
    \frac{1}{2}e^{-(C + 4)n\epsilon_n^2}.
  \end{equation}
\end{remark}
Next is a technical assumption which underlines relationship
between the eigenbasis of $G^TG$ and the basis $\{\varphi_k\}$.
\begin{assumption}\label{assum3}
  Let $S \equiv (G^{-1})^T$, then define
  \begin{equation}\label{eqn:def-gr}
    g_{k,r} := \max_{\begin{subarray}{c} \|h\|_1 \leq 1 \\ h \in
        \text{span} \{\varphi_1,\ldots,\varphi_k\} \end{subarray}}
    \|\zeta^{1/2}SP_r^eh\|^2_2.
  \end{equation}
  We assume that $\sqrt{g_{k_n,r_n}} \leq C_1
  \frac{\xi_n}{\epsilon_n}$.  Note that $g_{k,r}$ is finite whenever
  $k$ and $r$ are finite.
\end{assumption}

In short, the choice of a priors $\mu_n$ and the operator $G$ will
determine the sequence $\{\eps_n\}$ in Assumption
\ref{assum1}.  Thereafter, $\{\eps_n\}$ along with $G$ and $\mu_n$
will dictate the existence of $\{k_n\}$, $\{r_n\}$ and $\{\xi_n\}$ appearing in
Assumptions \ref{assum2} and \ref{assum3}. Finally, equipped with the
above sequences, we shall see in the following lemma that $\xi_n$
is the rate of contraction of the posterior measure
$\mu_n^y$ defined in equation~\eqref{eqn:muyb}.

\begin{lemma}[Contraction Lemma]\label{lem:contraction}
  Consider the model given by equation~\eqref{eqn:begin}, together
  with Assumptions~\ref{assum1}-\ref{assum3} stated above. Also, let
  $u_0$ be such that $\left\|P^{\varphi}_{k_n}P^e_{r_n}(u_0) -
    u_0\right\|_1 = O(\xi_n)$. Then, for some $M > 0$,
  \begin{equation}\label{eqn:contraction}
    \mu_n^y ( u : \left\|u - u_0\right\|_1 > M \xi_n) \rightarrow 0
    \,\,\,\,\textrm{ in probability  }   \bbqon,
  \end{equation}
  where, recall that, $\xi_n$ is called the rate of contraction of the
  posterior measure $\mu^y_n$ around the true value $u_0$.
\end{lemma}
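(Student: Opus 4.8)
The plan is to follow the classical testing approach of Ghosal--Ghosh--van der Vaart, adapted to the small-noise inverse-problem setting as in Ray's work. Write $\mu_n^y(u : \|u - u_0\|_1 > M\xi_n)$ using the explicit formula \eqref{eqn:muyb}, i.e. as a ratio $N_n^y / Z_n^y$ where the numerator integrates $e^{-\phiyu}$ over the complement of the ball and the denominator is the normalizing constant $Z_n^y$. The standard strategy is to introduce a sequence of test functions $\psi_n = \psi_n(y) \in [0,1]$ (these are the tests referred to in proposition~\ref{prop:test}, which I would invoke) and to split
\begin{align*}
  \mu_n^y(u : \|u-u_0\|_1 > M\xi_n) \le \psi_n + (1-\psi_n)\,\mu_n^y(u : \|u-u_0\|_1 > M\xi_n).
\end{align*}
One then shows three things converge to zero in $\bbqon$-probability: (i) $\E_{\bbqon}\psi_n \to 0$ (the tests have small type-I error under the truth); (ii) on a high-probability event, the denominator satisfies a lower bound $Z_n^y \ge e^{-(C+c)n\eps_n^2}$ — this is exactly where Assumption~\ref{assum1} enters, by restricting the integral defining $Z_n^y$ to the small forward-ball and controlling $\phiyu - \phi(y,u_0)$ there; and (iii) the ``biased'' numerator term $(1-\psi_n)\,N_n^y$ is, in expectation, bounded by something like $e^{-(C+4)n\eps_n^2}$ times a constant, so that the ratio in the last term vanishes.

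For step (iii) the contribution splits further according to whether $u$ lies in the good finite-dimensional set $A_n := \{u : \|P^\varphi_{k_n}P^e_{r_n}u - u\|_1 \le C_2\xi_n\}$ or its complement. On $A_n^c$ we use Assumption~\ref{assum2}, inequality \eqref{eqn:pr-proj}, to bound the prior mass by $e^{-(C+4)n\eps_n^2}$, and then bound $e^{-\phiyu}$ crudely (or use that $(1-\psi_n)$ times this is integrable under $\bbqon$ by the Cameron--Martin change of measure). On $A_n \cap \{\|u-u_0\|_1 > M\xi_n\}$: by the hypothesis $\|P^\varphi_{k_n}P^e_{r_n}u_0 - u_0\|_1 = O(\xi_n)$ and the definition of $A_n$, the condition $\|u - u_0\|_1 > M\xi_n$ forces $\|P^\varphi_{k_n}P^e_{r_n}(u - u_0)\|_1 \gtrsim (M - O(1))\xi_n$, a separation that lives in the finite-dimensional space $\mathrm{span}\{\varphi_1,\dots,\varphi_{k_n}\}$ (after projecting). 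Here the tests $\psi_n$ are designed, using Assumption~\ref{assum3}, so that $\E_{\bbqon}(1-\psi_n)\mathbf 1\{u : \ldots\}$ decays faster than $e^{-(C+4)n\eps_n^2}$: the key quantity is $g_{k_n,r_n}$, which measures how a unit separation in $\|\cdot\|_1$ on the $\varphi$-side translates, through $\zeta^{1/2}(G^{-1})^T$, into a separation in the data-norm $\|\cdot\|_\zeta$ that the likelihood ratio test can detect; the bound $\sqrt{g_{k_n,r_n}} \le C_1 \xi_n/\eps_n$ is precisely what makes the test exponent beat $n\eps_n^2$ after choosing $M$ large and using $k_n < Rn\eps_n^2$ to control the dimension-dependent constant.

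Concretely, for (iii) I would construct $\psi_n$ as (the supremum over a minimal $\xi_n$-net of the relevant finite-dimensional separated set of) likelihood-ratio tests between $G(u_0)$ and $G(u)$; each individual test has type-I and type-II errors bounded by $e^{-c n \|G(u)-G(u_0)\|_\zeta^2}$, and $\|G(u)-G(u_0)\|_\zeta$ is bounded below in terms of $\|P^\varphi_{k_n}P^e_{r_n}(u-u_0)\|_1$ via $g_{k_n,r_n}$ — this is the content of the proposition on tests. The union bound over the net costs a factor exponential in $k_n \le Rn\eps_n^2$, which is absorbed by taking $M$ large; I expect this covering-number-versus-test-exponent balancing, together with verifying that the denominator lower bound and the numerator upper bound have compatible exponents ($C+4$ on the numerator side against $C + (\text{small})$ on the denominator side, leaving room to spare), to be the main obstacle and the place where the constants in Assumptions~\ref{assum1}--\ref{assum3} have to be tracked carefully. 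Finally, assembling (i)--(iii): outside an event of $\bbqon$-probability $o(1)$ we get $\mu_n^y(\|u-u_0\|_1 > M\xi_n) \le \psi_n + e^{-(C+4)n\eps_n^2}\cdot e^{(C+c)n\eps_n^2} = \psi_n + e^{-(4-c)n\eps_n^2} \to 0$, which with (i) gives convergence in probability and proves \eqref{eqn:contraction}.
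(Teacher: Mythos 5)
Your overall architecture coincides with the paper's: reduce to the existence of tests satisfying \eqref{eqn:test-1}--\eqref{eqn:test-2} (Proposition~\ref{prop:test}), handle the denominator via Assumption~\ref{assum1} and the numerator via Fubini, the change of measure $\bbqon\to\bbqun$, and the sieve bound \eqref{eqn:pr-proj}; and your use of Assumption~\ref{assum3} through the duality bound $\|P^{\varphi}_{k_n}P^e_{r_n}(u-u_0)\|_1 \le \sqrt{g_{k_n,r_n}}\,\|G(u)-G(u_0)\|_{\zeta}$ is the right way to turn the $\|\cdot\|_1$-separation into prediction-metric separation (though this inequality is not ``the content of'' Proposition~\ref{prop:test}; it is a separate elementary step). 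The genuine gap is in your construction of the tests themselves. You build $\psi_n$ as a supremum of likelihood-ratio tests over a net of the separated set and assert that the union bound ``costs a factor exponential in $k_n\le Rn\eps_n^2$,'' absorbed by taking $M$ large. That covering estimate is not justified: each individual test is robust only over a prediction-metric ball of radius proportional to the separation, i.e.\ of order $\eps_n$ (times the shell index), while the set to be covered is (a slab around) a $k_n$-dimensional $\|\cdot\|_1$-ball whose prediction-metric diameter is of order $\xi_n$ per shell. A generic covering therefore costs $\exp\bigl(k_n\log(C\,\xi_n/\eps_n)\bigr)\approx \exp\bigl(k_n\log\sqrt{g_{k_n,r_n}}\bigr)$, not $\exp(O(k_n))$. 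Under Assumptions~\ref{assum1}--\ref{assum3} nothing prevents $k_n\asymp n\eps_n^2$ together with $\xi_n/\eps_n\to\infty$ (this is exactly the regime of the examples, e.g.\ $\gamma\ge\delta$ in Example~\ref{sec:meyer}), so this entropy term is of strictly larger order than $n\eps_n^2$ and cannot be beaten by choosing a constant $M$ large; closing it would require an additional entropy hypothesis or a finer, example-specific ellipsoid computation, neither of which is available at the level of generality of the lemma.

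The paper avoids this obstruction entirely by using a plug-in test rather than a net of likelihood-ratio tests: with $\tilde{\varphi}_{k,r_n}=SP^e_{r_n}\varphi_k$ one forms the estimator $\hat u_n=\sum_{k\le k_n}\langle y,\tilde\varphi_{k,r_n}\rangle\,\varphi_k$, whose mean is $P^{\varphi}_{k_n}P^e_{r_n}u$, and takes $\psi_n=1_{\{\|\hat u_n-u_0\|_1\ge M_0\xi_n\}}$. The Borell--TIS inequality then gives a deviation bound that is uniform in $u$ with no covering argument: the relevant variance is $\sigma_0^2=g_{k_n,r_n}/n$ and the expected deviation is at most $\sqrt{k_n g_{k_n,r_n}/n}\le \sqrt{R}\,C_1\xi_n$ (this is where $k_n<Rn\eps_n^2$ and Assumption~\ref{assum3} enter), yielding $\bbqun\bigl(\|\hat u_n-\bbqun(\hat u_n)\|_1\ge M\eps_n\sqrt{g_{k_n,r_n}}\bigr)\le e^{-Ln\eps_n^2}$ for arbitrarily large $L$, which is exactly what \eqref{eqn:test-1}--\eqref{eqn:test-2} require. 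If you replace your step (iii) by this estimator-based test (or add an explicit entropy condition strong enough to pay for your net), the rest of your outline goes through and matches the paper's proof.
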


\begin{remark}
  Clearly, Assumption \ref{assum3} does not place any a priori
  restrictions on the basis of the prior as in previous works (see
  \cite{ray}).  We also note here that, at this stage we do not know
  if Assumption \ref{assum2} can be verified for a given problem when
  $r_n$ is finite.  However, in our next lemma we shall weaken
  Assumption \ref{assum2} by setting $r_n=\infty$.
\end{remark}

We shall prove the above lemma in an indirect way which follows the
work of Ghosal et. al. \cite{Ghosal}, where the authors established a
close relation between contraction rates and existence of sequence of
{\it tests}, which are real valued functions $\{\psi_n\}_{n\ge 1}$
defined on $\widehat{\calH}_2$ and satisfying certain regularity
conditions, which in turn translate into contraction rates.  More
precisely, we shall use the following proposition.

\begin{proposition}\label{prop:test}
  Let there exist {\bf tests} $\psi_n: \widehat{\calH}_2 \to\real$ for
  $n\ge 1$, satisfying
  \begin{eqnarray}
    \label{eqn:test-1}
    \sup_{ \{u\in\calH_1 : \left\|P_{k_n,r_n}(u) - u\right\|_1 \leq
      C_2 \xi_n,\left\|u - u_0 \right\|_1 \geq M \xi_n \} } \bbqun(1 -
    \psi_n)  &\leq & e^{-(C + 4)n\epsilon_n^2}\\
    \label{eqn:test-2}
    \bbqon(\psi_n) & \rightarrow & 0,
  \end{eqnarray}
where all the constants appearing above are as defined in Assumptions 
\ref{assum1}-\ref{assum3}.
Then $\xi_n$ is the contraction rate of the posterior measure
  $\mu^y_n$ around the true value $u_0$. Here $P_{k_n,r_n}$ is the
composition of $P^{\varphi}_{k_n}$ and $P^{e}_{r_n}$.
\end{proposition}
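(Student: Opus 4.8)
The plan is to follow the standard testing argument of Ghosal et al., adapted to the small-noise setting of this paper, showing that the two hypotheses on the tests $\psi_n$ force the posterior mass of the complement of an $M\xi_n$-ball to vanish in $\bbqon$-probability. First I would fix $\delta>0$ and split $f^y_{n,M\xi_n} = \mu_n^y\{u : \|u-u_0\|_1 > M\xi_n\}$ using the test: write $f^y_{n,M\xi_n} \le \psi_n(y) + (1-\psi_n(y))\, f^y_{n,M\xi_n}$. The first term goes to zero in $\bbqon$-probability directly from \eqref{eqn:test-2} and Markov's inequality. For the second term I would use the explicit posterior formula \eqref{eqn:muyb}: on the set where $\|u-u_0\|_1 > M\xi_n$ we have
\begin{align*}
(1-\psi_n(y))\, f^y_{n,M\xi_n} = \frac{1}{Z^y_n} \int_{\{\|u-u_0\|_1 > M\xi_n\}} (1-\psi_n(y)) \exp(-\phiyu)\, d\mu_0(u).
\end{align*}

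The heart of the argument is the usual numerator/denominator estimate. For the denominator $Z^y_n$, I would use Assumption~\ref{assum1}: restricting the integral to the event $\{u : \|G(u)-G(u_0)\|_\zeta \le \epsilon_n\}$, a standard bound (this is where the large-deviation-type control of $\phiyu$ on $\bbqon$ enters, exactly as in \cite{ray, Ghosal}) shows that with $\bbqon$-probability tending to $1$, $Z^y_n \ge e^{-C'n\epsilon_n^2}$ for a suitable constant $C'$, because the prior mass of that event is at least $e^{-Cn\epsilon_n^2}$ by \eqref{eqn:fwd-ball}. For the numerator, I would further split the region $\{\|u-u_0\|_1 > M\xi_n\}$ into the ``good'' part $A_n := \{\|u-u_0\|_1 > M\xi_n,\ \|P_{k_n,r_n}(u)-u\|_1 \le C_2\xi_n\}$ and the ``bad'' part $B_n$ where the projection approximation fails. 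On $B_n$, Assumption~\ref{assum2} (inequality~\eqref{eqn:pr-proj}) bounds the prior mass by $e^{-(C+4)n\epsilon_n^2}$, and since $\exp(-\phiyu)$ integrates (in $y$) against $\bbqon$ to something controlled, this contribution to the numerator is $O(e^{-(C+4)n\epsilon_n^2})$ in expectation. On $A_n$, I would take $\bbqon$-expectation and use Fubini/Tonelli to swap the $y$-integral inside, turning $\int (1-\psi_n(y))\exp(-\phiyu)\, d\bbqon(y)$ into $\bbqun(1-\psi_n)$ after recognizing $\exp(-\phiyu)\,d\bbqon = d\bbqun$ via \eqref{eqn:rnderiv} (with the roles of $u_0$ and $u$ handled by the Cameron-Martin shift); this is precisely the quantity controlled by \eqref{eqn:test-1}, giving $\le e^{-(C+4)n\epsilon_n^2}$ uniformly over $A_n$, hence the $A_n$-contribution to the numerator is also $O(e^{-(C+4)n\epsilon_n^2})$.

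Combining, with $\bbqon$-probability tending to $1$ we get
\begin{align*}
(1-\psi_n(y))\, f^y_{n,M\xi_n} \lesssim \frac{e^{-(C+4)n\epsilon_n^2}}{e^{-C'n\epsilon_n^2}} \cdot \text{(polynomial factors)} \longrightarrow 0,
\end{align*}
provided the constants line up — the exponent $C+4$ in \eqref{eqn:test-1} and \eqref{eqn:pr-proj} is chosen precisely so that it beats the $C'$ coming from the denominator (the ``$+4$'' absorbs the slack from Markov's inequality and the event on which the denominator bound holds), using $n\epsilon_n^2\to\infty$. The condition $\|P^\varphi_{k_n}P^e_{r_n}(u_0)-u_0\|_1 = O(\xi_n)$ is what guarantees $u_0$ itself lies (eventually) in the region where the test's defining inequality \eqref{eqn:test-1} is meaningful, i.e., that the sup in \eqref{eqn:test-1} is over a nonempty, correctly-centered set and that the Cameron-Martin shift by $G(u_0)$ in passing between $\bbqon$ and $\bbqun$ stays controlled.

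The main obstacle I anticipate is the denominator estimate: showing that $Z^y_n$ is not too small with high $\bbqon$-probability requires controlling $\phiyu$ uniformly over the small ball $\{\|G(u)-G(u_0)\|_\zeta \le \epsilon_n\}$ when $y = G(u_0) + \eta/\sqrt n$. Writing $\phiyu = \tfrac{n}{2}\|G(u)\|_\zeta^2 - n\langle y, G(u)\rangle_\zeta = \tfrac{n}{2}\|G(u)-G(u_0)\|_\zeta^2 - \tfrac{n}{2}\|G(u_0)\|_\zeta^2 - \sqrt{n}\langle \eta, G(u)\rangle_\zeta$, the deterministic part is $\le \tfrac{n}{2}\epsilon_n^2$ (up to the $u$-independent term that cancels in the ratio defining $\mu_n^y$), while the stochastic term $\sqrt n\langle\eta, G(u)\rangle_\zeta$ must be bounded uniformly in $u$ over the ball — this needs a chaining or Borell–TIS argument on the Gaussian process $u \mapsto \langle \eta, G(u)-G(u_0)\rangle_\zeta$, or alternatively the cruder bound via $\|G(u)\|_\zeta \le \|G(u_0)\|_\zeta + \epsilon_n$ which suffices since we only need the bound to hold on an event of $\bbqon$-probability $\to 1$, not almost surely. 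I expect the clean way is to mirror \cite[proof of Theorem 2.1]{ray} verbatim at this point, citing that the event $\{Z^y_n \ge e^{-C'n\epsilon_n^2}\}$ has $\bbqon$-probability at least $1 - o(1)$, and to spend the detailed writing on the numerator split and the Fubini step converting the test integral against $\bbqon$ into $\bbqun(1-\psi_n)$.
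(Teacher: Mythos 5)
Your proposal is correct and follows essentially the same route as the paper's proof: handle the $\psi_n$ part by \eqref{eqn:test-2}, bound the $\bbqon$-expectation of the $(1-\psi_n)$-weighted numerator via Fubini and the Cameron--Martin change of measure (so \eqref{eqn:test-1} controls the good set and Assumption~\ref{assum2} the bad set), and lower-bound the denominator on a high-probability event using the small-ball condition of Assumption~\ref{assum1}. The denominator step you flagged as the main obstacle is resolved in the paper exactly as you suspected, not by a uniform (chaining) bound on the noise term but by integrating over the small ball against an auxiliary probability measure $\nu$ and applying Jensen plus Chebyshev to the resulting single Gaussian functional, which yields $\bbqon(S_n)\to 1$ for the event on which the denominator exceeds $e^{-(C+2)n\epsilon_n^2}$.
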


In view of the above proposition, which will be proved later, the
proof of Lemma \ref{lem:contraction} proceeds as follows: under
Assumptions \ref{assum1}-\ref{assum3}, we shall show that the same tests
$\psi_n$ as were used in the work of Ray \cite{ray} satisfy conditions
\eqref{eqn:test-1} and \eqref{eqn:test-2} above.

\begin{proof}[Proof of Lemma \ref{lem:contraction}.]
  Recall the model $y = G(u) + \frac{1}{\sqrt n} \eta$.  We will
  define
  $$ \tilde{\varphi}_{k,r} = \sum_{i=1}^{r}\frac{\left\langle
      \varphi_k, e_i\right\rangle}{\rho^2_i} Ge_i \overset{\Delta}{=}
  SP^e_r\varphi_k. $$\\
  Then, setting $\tilde{y}_{k,r}$ as the projection of $y$ onto
  $\tilde{\varphi}_{k,r}$, we can write
  $$ \tilde{y}_{k,r} = \left\langle
    \tilde{\varphi}_{k,r},G(u)\right\rangle +
  \frac1{\sqrt{n}}\tilde{\eta}_{k,r} = \left\langle u,
    P^e_r\varphi_k\right\rangle +
  \frac1{\sqrt{n}}\tilde{\eta}_{k,r},$$
  where $\tilde{\eta}_{k,r} = \langle
  \eta, \tilde{\varphi}_{k,r} \rangle$. Thereafter, we define
\begin{equation} \label{eqn:est-u}
\hat{u}_n = \sum_{k=1}^{k_n} \tilde{y}_{k,r_n} \varphi_k
\end{equation}
  Clearly, $\hat{u}_n = P^{\varphi}_{k_n}P^e_{r_n}u + \sum_{k =
    1}^{k_n}\frac1{\sqrt{n}}\tilde{\eta}_{k,r_n}$. Then we define the
  tests as
  \begin{equation}\label{eqn:def-test}
    \psi_n(y) = 1_{\{ \left\| \hat{u}_n - u_0 \right\|_1 \geq M_0
      \xi_n \}},
  \end{equation}
  where $M_0$ is some non-negative constant to be identified later.

  In order to prove that the proposed tests satisfy conditions
  \eqref{eqn:test-1} and \eqref{eqn:test-2} we shall begin by
  estimating the distribution of $\hat{u}_n$ around its mean. To this
  end, using the separability of $\calH_1$ and the Hahn-Banach
  theorem, there exists a dense subset $B_1$ of a unit ball in
  $\calH_1$ such that for every element $v\in\calH_1$,
  $$\|v\|_1 = \sup_{w\in B_1} \left| \langle v,w\rangle_1\right|$$
  Interpreting the norm as the supremum over a dense set as above, and
  using the Borell-TIS inequality \cite[Theorem 2.1.1]{RFG} we get,
  \begin{equation}\label{eqn:Borell}
    \bbq_{u,n} \left[ \left\| \hat{u}_n - \bbq_{u,n} \left( \hat{u}_n
        \right) \right\|_1 - \bbq_{u,n} \left( \left\| \hat{u}_n -
          \bbq_{u,n} \left( \hat{u}_n\right) \right\|_1 \right) \geq x
    \right] \leq e^{-\frac{x^2}{2\sigma_0^2}}, \,\,\,\,\,\forall x > 0
  \end{equation}
  where $\sigma_0$ is given as
  \begin{equation}\label{eqn:sigma2}
    \sigma_0^2 = \sup_{h \in B_1}  \bbq_{u,n}\left( \frac{1}{\sqrt{n}}
      \sum_{k = 1}^{k_n} \tilde{\eta}_{k,r_n} \left \langle h, \varphi_k
      \right \rangle_1 \right)^2 . 
  \end{equation}
  \\
  In order to get appropriate estimates on $\hat{u}_n$, we need to now
  estimate $\sigma_0$ and $\bbq_{u,n}\left(\left\| \hat{u}_n -
      \bbq_{u,n} \left(\hat{u}_n\right)\right\|_1\right)$. Notice that
  by Jensen's inequality, we get
  \begin{align*}
    \left[\bbq_{u,n} \left(\left\| \hat{u}_n - \bbq_{u,n} \left(
            \hat{u}_n \right) \right\|_1 \right) \right]^2 &\le
    \frac1n \sum_{k = 1}^{k_n} \bbq_{u,n} \left( \tilde{\eta}_{k,r_n}^2
    \right) \\
    &= \frac1n \sum_{k = 1}^{k_n} \left\| \zeta^{1/2}
      \tilde{\varphi}_{k,r_n} \right\|_2^2 = \frac1n \sum_{k =
      1}^{k_n} \left\| \zeta^{1/2} S P^e_{r_n} \varphi_k \right\|_2^2
  \end{align*}
  Then, recalling the definition of $g_n$ from Assumption
  \ref{assum3}, we have
  \begin{align}\label{eqn:mean-deviation}
    \left[ \bbq_{u,n} \left( \left\| \hat{u}_n - \bbq_{u,n} \left(
            \hat{u}_n \right) \right\|_1 \right) \right]^2 \leq
    \frac{k_n}{n} \left[ \max_{\begin{subarray}{c} \|h\|_1 = 1 \\ h \in
        \text{span} \{\varphi_1, \ldots,
        \varphi_{k_n}\} \end{subarray}} \left\| \zeta^{1/2} S
      P_{r_n}^e h \right\|^2_2 \right] \leq \frac{k_n}{n}
    g_{k_n,r_n}
  \end{align}
  Thereafter, to estimate $\sigma_0^2$, we start with the following
  \begin{align*}
  \bbq_{u,n}\left( \frac{1}{\sqrt{n}}
      \sum_{k = 1}^{k_n} \tilde{\eta}_{k,r_n} \left \langle h, \varphi_k
      \right \rangle_1 \right)^2  
    &= \frac1n \bbq_{u,n}\left\langle
\eta,\sum_{k
        = 1}^{k_n}\left\langle h,\varphi_k\right\rangle
      \tilde{\varphi}_{k,r_n}\right\rangle_2^2 \\
    &= \frac1n \left\|\zeta^{1/2}\left(\sum_{k = 1}^{k_n}\left\langle
          h,\varphi_k\right\rangle
        \tilde{\varphi}_{k,r_n}\right)\right\|_2^2\\
    &= \frac1n
    \left\|\zeta^{1/2}SP^e_{r_n}P^{\varphi}_{k_n}(h)\right\|_2^2.
  \end{align*}
  Therefore, equation \eqref{eqn:sigma2} can be restated as
  \begin{equation}\label{eqn:sigma2-a}
    \sigma_0^2 = \sup_{h \in B_0} \frac1n \left\| \zeta^{1/2} S
      P^e_{r_n} P^{\varphi}_{k_n}(h) \right\|_2^2 =
    \frac{g_{k_n,r_n}}{n}
  \end{equation}
  \\
  Substituting the conclusions of equations~\eqref{eqn:mean-deviation}
  and \eqref{eqn:sigma2-a} in the inequality~\eqref{eqn:Borell} and
  choosing $x^2 = 2L\epsilon_n^2g_{k_n,r_n}$, for some $L >0$. we have for large enough $M$ (depending on $L$),
  \begin{equation}
    \bbq_{u,n} \left( \left\| \hat{u}_n - \bbq_{u,n} \left( \hat{u}_n
        \right) \right\|_1 \geq M \epsilon_n \sqrt{g_{k_n,r_n}}
    \right) \leq \exp(-L n \epsilon_n^2),
  \end{equation}
  which is analogous to equation 4.2 in \cite{ray} with
  $\sqrt{g_{k_n,r_n}}$ replacing $1/\delta_{k_n}$. Notice that the
  above estimate does not depend on $u$, which is not coincidental but
  a simple consequence of Gaussian computation.

  Using the above estimate and following the Proof of Theorem 2.1 in
  \cite{ray}, almost line by line, we can prove that the proposed
  tests satisfy conditions \eqref{eqn:test-1} and
  \eqref{eqn:test-2}, which together with Proposition \ref{prop:test}
  proves the contraction lemma.
\end{proof}

Next, we shall prove Proposition \ref{prop:test}. It is easy to see
that both the numerator as well as the denominator in equation
\eqref{eqn:muyb} for the posterior measure of a set converge to
$0$ as $n \rightarrow \infty$. Therefore, in order to extract the
exact rate, we shall analyse the numerator and denominator separately.

\begin{proof}[Proof of Proposition \ref{prop:test}.]
  As before, writing $\bbq_{u_0,n}$ as expectation with respect to the
  noise measure shifted by $G(u_0)$, notice that for any Borel subset
  $B$ of $\calH_1$, we have
  \[
  \bbq_{u_0,n} \left(\mu^y_n(B)\psi_n\right) \leq \bbq_{u_0,n} \psi_n
  \rightarrow 0 \ \ \ \ \ \textrm{ by condition~\eqref{eqn:test-2}}.
  \]
  Multiplying the numerator and denominator of
  equation~\eqref{eqn:muyb} with a constant \\$\exp \left( \frac{n \|
      G(u_0) \|_{\zeta}^2}{2} - n \left\langle G(u_0), y
    \right\rangle_{\zeta} \right)$ that depends on $u_0$ but is
  independent of $u$, allows us to write the posterior measure as
  $$\mu^y_n(B) = \frac{1}{Z_{u_0}} \int_B\exp\left(-\frac{n\|
      G(u)\|_{\zeta}^2}{2} + \frac{n\| G(u_0)\|_{\zeta}^2}{2} +
    n\left\langle G(u - u_0),y\right\rangle_{\zeta}\right)d\mu_n(u)
  \,.$$
  Note that the above algebraic manipulation can be done for a.e.-$y$ (under the noise measure).  
  Next, denoting $W_n^y(B)$ as the
  numerator of $\mu^y_n(B)$ above, we now estimate
  $\bbq_{u_0,n}\left[(1 - \psi_n)W^y_n(B)\right]$.  Applying Fubini
  theorem we observe that
  \begin{align*}
    & \bbq_{u_0,n} \left( \int_B (1 - \psi_n) \exp \left( -\frac{n \|
          G(u)\|_{\zeta}^2}{2} + \frac{n \| G(u_0)\|_{\zeta}^2}{2} + n
        \left\langle G(u - u_0), y \right\rangle_{\zeta} \right) d
      \mu_n  \right) \\ &= \int_B  \bbq_{u_0,n} \left( (1 - \psi_n)
      \exp \left( -\frac{n \| G(u) \|_{\zeta}^2}{2} + \frac{n \|
          G(u_0) \|_{\zeta}^2}{2} + n \left\langle G(u - u_0), y
        \right\rangle_{\zeta} \right) \right) d\mu_n \\ &= \int_B
    \bbq_{u,n} \left( 1 - \psi_n \right) d\mu_n \,,
  \end{align*}
  where we have used the Gaussian change of measure formula relating
  $\bbq_{u_0,n}$ and $\bbq_{u,n}$ to get the last equality. Then setting
$B = \{u\in\calH_1 : \left\|P^{\varphi}_{k_n}(u) -
    u\right\|_1 \leq C_2 \xi_n,\left\|u - u_0 \right\|_1 \geq M \xi_n
  \}$, and using inequality~\eqref{eqn:test-1}, we get
  $$\bbq_{u_0,n} \left[ (1 - \psi_n) W^y_n(B) \right] \leq \exp
  \left(-(C + 4) n \epsilon_n^2 \right).$$
  Hence,
  \begin{align*}
    & \bbq_{u_0,n} \left( (1 - \psi_n) W^y_n (\{u : \left\| u - u_0
      \right\|_1 \geq M \xi_n \}) \right) \\ & \leq \exp \left(-(C + 4) n
      \epsilon_n^2 \right) + \mu_n \{ \left\| P_{k_n}(u) - u
    \right\|_1 > C_2 \xi_n \}
  \end{align*}
  Next, we analyse the denominator of equation~\eqref{eqn:muyb}. Let
  $D = \{ u : \left\|G(u)- G(u_0)\right\|_{\zeta} \leq \epsilon_n \},$
  and $\nu$ be a probability measure on D. We shall try to estimate
  the integrand (in the denominator) on this set $D$. To start with,
  we see that
  \[ -\frac{\| G(u)\|_{\zeta}^2}{2} + \frac{\| G(u_0)\|_{\zeta}^2}{2}
  + \left\langle G(u - u_0),y\right\rangle_{\zeta} = -\frac{\| G(u -
    u_0)\|_{\zeta}^2}{2} + \left\langle G(u - u_0),y -
    G(u_0)\right\rangle_{\zeta} \]
  Then,
  \begin{align*}
    & \bbq_{u_0,n}\left(\int_D (-\frac{\| G(u - u_0)\|_{\zeta}^2}{2} +
      \left\langle G(u - u_0),y - G(u_0)\right\rangle_{\zeta})d\nu
      \leq -(1 + C)\epsilon_n^2 \right)\\
    &\leq  \bbq_{u_0,n}\left(\int_D (\left\langle G(u - u_0),y -
        G(u_0)\right\rangle_{\zeta})d\nu \leq - (1/2 + C)\epsilon_n^2
    \right)\\
    &= \bbq_{u_0,n}\left(\int_D (\left\langle G(u -
        u_0),Z\right\rangle_{\zeta})d\nu \leq -(1/2 + C)\epsilon_n^2
    \right)\\
  \end{align*}
  Now, using Chebyshev's and Jensen's inequalities successively, we
  get
  \begin{align*}\label{eqn:den}
    & \bbq_{u_0,n}\left(\int_D (\left\langle G(u -
        u_0),Z\right\rangle)d\nu \leq -(1/2 + C)\epsilon_n^2
    \right)\\
    &\leq \frac{\int_D \bbq_{u_0,n}\left\langle Z, G(u -
        u_0)\right\rangle^2_{\zeta}d\nu}{(-(1/2 + C)\epsilon_n^2 )^2}
    \leq \frac{\epsilon_n^2}{n(-(1/2 + C)\epsilon_n^2 )^2}\\ &\leq
    K\,\left(n\epsilon_n^2\right)^{-1}
  \end{align*}
  for some $K > 0.$ Next, let us define
  $$S_n = \left\{ y : \int_D
    \left(-\frac{\| G(u - u_0)\|_{\zeta}^2}{2} + \left\langle G(u -
        u_0), y \right\rangle_{\zeta} \right) d\nu \geq -(1 +
    C)\epsilon_n^2 \right\} \,,$$
  and note that $\bbq_{u_0,n}(S_n) \to 1$.

  It is easy to see that the numerator and the denominator of
  $\mu^y_n(\{u :\left\|u - u_0 \right\|_1 \geq M \xi_n \})(1 -
  \psi_n)$ converge in probability (w.r.t. $\bbq_{u_0,n}$) to $0$ at
  rates $\exp\left(-(C + 4)n\epsilon_n^2\right)$ and $\exp\left(-(C +
    2)n\epsilon_n^2\right)$, respectively.  This implies that
  $\mu^y_n(\{u :\left\|u - u_0 \right\|_1 \geq M \xi_n \})$ converges
  in probability (w.r.t. $\bbq_{u_0,n}$) to $0$.
\end{proof}

\subsection{Contraction lemma for a restricted model}

In Lemma~\ref{lem:contraction}, we needed to get estimates for prior
measure along $\{e_k\}$ only on $k_n$ dimensional subspaces along the
prior oriented axes.  In case it is difficult to obtain estimates as in equation \eqref{eqn:pr-proj},
we need certain conditions on $\{\varphi_k\}$ with respect to $G$
to help us get rid of estimating any tail cylinder sets along axes
other than what the prior is oriented along. It is not difficult to check that this 
can be achieved by setting $r_n = \infty$. We now state
the new set of assumptions.

Assumption \ref{assum1} stays the same, and in lieu of
Assumption \ref{assum2}, we have the following assumption.

\begin{assumption}\label{assum2''}
  Assume that there exist constants $\xi_n$, sequences of positive
  integers $k_n$, and a basis $\{\varphi_n\}_{n\ge 1}$ of $\calH_1$
  satisfying
  \begin{itemize}
  \item $k_n < Rn\eps_n^2$ for some $R>0$ and $\eps_n$ as in the
    previous assumption;
  \item $\max\{\xi_n, k_n^{-1}\} \rightarrow 0$ as $n\to\infty$, and
  \item writing $P_{k_n}$ as the projection onto the subspace spanned
    by $\{\varphi_1,\ldots,\varphi_{k_n}\}$, we need
    \begin{equation}\label{eqn:pr-proj4}
      \mu_n \{u : \left\|P_{k_n}(u) - u\right\|_1 > C_2 \xi_n\} \leq
      e^{-(C + 4)n\epsilon_n^2}
    \end{equation}
  \end{itemize}
\end{assumption}
Further, instead of assumption \ref{assum3}, we have
\begin{assumption}\label{assum3''}
  Define $g_k$ by
  \begin{equation}\label{eqn:def-gr1}
    g_{k} := \max_{\begin{subarray}{c} \| h \|_1 \le 1 \\ h \in
        \text{span} \{ \varphi_1, \ldots, \varphi_k\} \end{subarray}}
     \!\!\!\!\!   \|\zeta^{1/2}Sh\|_2^2 \,\,\,
=    \!\!\!\!\! \max_{\begin{subarray}{c} \| h \|_1 = 1 \\ h \in
        \text{span} \{ \varphi_1, \ldots, \varphi_k\} \end{subarray}}
       \!\!\!\!\! \|\zeta^{1/2}Sh\|_2^2
  \end{equation}
  Finiteness of $g_k$ can be guaranteed by imposing the condition that
  $S\varphi_j \in \calH_2$ for all j.  We assume that
  $\sqrt{g_{k_n}} \leq C_1 \frac{\xi_n}{\epsilon_n}$.
\end{assumption}
Note that as a benefit of setting $r_n = \infty$, we do not need to explicitly
know the singular value decomposition of $G$ to obtain contraction rates.
This brings us to the following result on contraction rates in the aforementioned setup.

\begin{lemma}\label{lemma r infinite}
  Consider the model given by \eqref{eqn:begin}, together with
  assumptions~\ref{assum1},\ref{assum2''},\ref{assum3''} stated
  above. Also, let $u_0$ be such that 
  \begin{equation}\label{eqn:u0-assum}
  \left\|P^{\varphi}_{k_n}(u_0) -  u_0\right\|_1 = O(\xi_n)
  \end{equation} 
  then,
  \begin{equation}\label{eqn:contraction3}
    \mu_n^y ( u : \left\|u - u_0\right\|_1 > M \xi_n) \rightarrow 0
    \,\,\,\,\textrm{ in probability} ,
  \end{equation}
\end{lemma}
We omit the proof of this version of the lemma as it proceeds along the lines
as in the proof of Lemma \ref{lem:contraction} using the same test functions.
This achieves the second part in subsection \ref{contribution}. We note that it is this lemma that we shall be
using in the examples below.


\subsection{A version of the Lemma \ref{lem:contraction}}

We shall now discuss a lemma which applies to certain semilinear cases. The proof is very similar to that of previous lemma. 

The model  here is
\begin{equation}
    y = G(u^{*}) + \frac{1}{\sqrt{n}} \eta.
\end{equation}
For simplicity, we take $\varphi_k = e_k$ and replace $u$ by $u^*$ in this lemma where $u^*$ is a bijective 
continuous transform of $u$ with a continuous inverse.
We shall follow the same proof and will take $e_i = \phi_i$. We shall adopt assumptions wherever necessary. Further, the equivalent of true solution shall be taken as $u_0^*$ instead of $u_0$.

 As in \eqref{eqn:est-u}, we construct an estimator for $u^{*}$ given by
\begin{equation*}
    \hat{u}_{n}(y) = P_{k_n}u^{*} + \frac{1}{\sqrt{n}}\sum_{k =
1}^{k_n} \tilde{\eta}_k.
\end{equation*}
$P_{n}$ is the projection operator on the basis $\{\varphi_k\} = \{e_k\}$.
We define the test function as
\begin{equation*}
    \psi_n(y) = 1_{\{\|\hat{u}_n - u_0^{*}\|_1 \geq M\xi_n\}}.
\end{equation*}
Proceeding as we would  have in the proof of Lemma \ref{lem:contraction}, we get
\begin{equation*}
\mathbb{Q}_{u,n}(\|\hat{u}_n - \mathbb{Q}_{u,n}(\hat{u}_n)\|_1 \geq
M\epsilon_ng_{k_n})\leq \exp{(-Ln\epsilon_n^2)},
\end{equation*}
where $\mathbb{Q}_{u,n}$ is the noise shifted by $G^{*}(u) \equiv G(u^*)$. Since we
assume that $\{e_i\}$ and $\{\phi_i\}$ are the same, assumption \ref{assum3} is trivially satisfied. 
Using the above equation and proceeding as in \cite{ray}, the following properties are easily verified:
\begin{eqnarray*}
    \mathbb{Q}_{u_0,n}\psi_n &\to& 0\\
    \sup_{\{ u \in \mathcal{H}_1 : \|P_{k_n}(u^{*}) - u^{*}\| \leq
C_2\xi_n, \|u - u_0\|_1 \geq M\xi_n\}} \mathbb{Q}_{u,n}(1 -
\psi_n) &\leq& \exp(-(C + 4)n\epsilon_n^2)\\
\end{eqnarray*}
where we have assumed that $\|P_{k_n}(u_0^{*}) - u_0^{*}\| \leq \xi_n$.

Note that by continuity (and continuous inverse) of the transform $u^{*}$, 
we are able to keep the condition $\|u - u_0\|_1 \geq
M\xi_n$ instead of changing it to $\|u^{*} - u^{*}_0\|_1 \geq M\xi_n$. 

Proceeding further in the proof, we first look at the denominator of the expression for posterior measure. In the
proof, we constructed a set $S_n$ with $\mathbb{Q}_{u_0,n}(S_n) \to 1$ and
find a uniform (w.r.t. $y$) lower bound on the denominator. For this, we
restricted the integration (wrt to the prior $\mu_n$) to the set $\{u :
\|G(u) - G(u_0)\| \leq \epsilon_n\}$. In the present case, we replace the
above set by $\{u : \|G(u^{*}) - G(u_0^{*})\| \leq \epsilon_n\}$ and the
denominator has the same uniform lower bound on $S_n$ as in proposition
3.7. Thus, we need to modify assumption \ref{assum1} to the following.
\begin{assumption}\label{assum1'}
Assume that there exist a sequence of real numbers $\{\eps_n\}_{n\ge
    1}$ such that $\eps_n \to 0$ and $n\eps_n^2 \to\infty$, such that
  \begin{equation}\label{eqn:fwd-ball}
    \mu_n \{ u : \left\|G(u^{*}) - G(u^{*}_0)\right\|_{\zeta} \leq \epsilon_n
    \} \geq e^{-Cn\epsilon_n^2}\ \ \ \ \ \ \textrm{ for some } C>0.
  \end{equation}
\end{assumption}
We can follow the steps for estimating the numerator of the posterior as in the proof  of Lemma \ref{lem:contraction} to get 
the exact same estimates for the numerator if we replace assumption \ref{assum2} by the following assumption
\begin{assumption}\label{assum2'}
  Assume that $G^TG$ has the eigenpair $\{e_k,\rho_k^2\}$.  Further, assume that there exist constants $\xi_n$, sequences of positive integers $k_n$
  \begin{itemize}
  \item $k_n < Rn\eps_n^2$ for some $R>0$ and $\eps_n$ as in the
    previous assumption;
  \item $\max\{\xi_n, k_n^{-1}\} \rightarrow 0$ as $n\to\infty$, such that the following holds.
   \begin{equation}\label{eqn:pr-proj3}
      \mu_n \{u : \left\|P{k_n}(u^{*}) - u^{*}\right\|_1 > C_2 \xi_n\} \leq
      e^{-(C + 4)n\epsilon_n^2}
    \end{equation}
  \end{itemize}
\end{assumption}
Having estimated the numerator and denominator in the expression for posterior, the following lemma is readily proved.
\begin{lemma}\label{corollary1}
  Consider the model given by \eqref{eqn:begin}, together with
  Assumptions~\ref{assum1},\ref{assum2'} stated
  above. Also, let $u_0$ be such that $\left\|P_{k_n}(u^{*}_0) -
    u^{*}_0\right\|_1 = O(\xi_n)$ then,
  \begin{equation}\label{eqn:contraction2}
    \mu_n^y ( u : \left\|u - u_0\right\|_1 > M \xi_n) \rightarrow 0
    \,\,\,\,\textrm{ in probability} ,
  \end{equation}
\end{lemma}

We will use this lemma to get contraction rates for a semilinear example.

\section{Examples} \label{sec-examples}

The examples we shall be discussing in this section fit in the setup of
Lemma \ref{lemma r infinite}. Notice that the requirement $\epsilon_n
\sqrt{g_{k_n}} < C_1 \xi_n$ is one of the main
constraints in proving this lemma. 
This condition dictates the kind of leeway we can get with the basis on
which the prior is defined. Further, the conditions that we
need to verify may also demand certain further relations between various
bases. The first two examples involve inversion of a mildly ill-posed linear 
operator (\ref{M}) with Gaussian priors. The third example deals with inversion 
of a severely-ill posed problem with Gaussian priors. 
The fourth example involves the inversion of a semilinear operator with Gaussian priors. 
The final example is an inversion of a linear operator with compactly supported prior.

Recall that the recipe to obtain the contraction rates involves following steps: first, we shall
provide a method to find appropriate $\{\epsilon_n\}$
which satisfy Assumption \ref{assum1}; then, we find appropriate $k_n$ and
$\xi_n$ satisfying Assumption \ref{assum2''}; thereafter, we check if we
have appropriate $g_n$ which satisfy Assumption \ref{assum3''};
lastly, we check if the true solution $u_0$ satisfies the assumption \ref{eqn:u0-assum} stated in Lemma \ref{lemma r infinite}.

\subsection{Gaussian priors}

We shall begin by estimating a lower bound for $\mu_n \{ u : \left\|G(u) - G(u_0)\right\|_{\zeta} \leq \epsilon\}$ 
and hence getting appropriate $\{\epsilon_n\}$.
Estimating this quantity can be a difficult task whenever the operators involved ($G$, covariance of the 
prior and covariance of the noise) in the computation have different eigenbases. We may choose to 
consider a smaller set to get this estimate, if it is convenient.
The general procedure is as follows. First, we push $\mu_n$ via the
map $\zeta^{-1/2}G$ to get the Gaussian measure $\nu_n$. We need to
evaluate 
\begin{equation} \label{eqn:log-small-ball} 
\phi(\nu_n,G(u_0),\epsilon) \equiv -\log\left(\nu_n\{z\in\calH_2 : \|z - G(u_0)\| \leq \epsilon\}\right).\end{equation}
Using Cameron Martin theorem, followed by Jensen's inequality, we get
\begin{equation}\label{eqn:rate-lwr-bd}
\phi(\nu_n,G(u_0),\epsilon) \leq \phi(\nu_n,0,\epsilon/2) +
\mathcal{K}(\nu_n, G(u_0), \epsilon)
\end{equation}
where $$\mathcal{K}(\nu_n, G(u_0), \epsilon) = \underset{y \in \calH_2
\|y - G(u_0)\| \leq \epsilon/2}{\inf} \ \ \ \|y\|_{\nu_n}^2$$
 where $\|\cdot\|_{\nu_n}$ is the Cameron Martin norm with respect to the measure $\nu_n$.
Note that the first term on right hand side of \eqref{eqn:rate-lwr-bd} depends only on the eigenvalues of $\nu_n$, while the
second term depends on expansion of $G(u_0)$ in the eigenbasis of
$\nu_n$ as well as eigenvalues of $\nu_n$. We shall be dealing with these
calculations in the appendix.

We now outline the general steps to be followed to calculate contraction rates using Lemma \ref{lemma r infinite}
\begin{itemize}
\item[1)] We calculate $\phi(\nu_n,G(u_0),\epsilon)$ using \eqref{eqn:rate-lwr-bd}, and find $\epsilon_n$ such that 
$$\phi(\nu_n,G(u_0),\epsilon_n) \geq n\epsilon_n^2.$$ 
Such $\epsilon_n$ will satisfy Assumption \ref{assum1}. Equation \eqref{eqn:rate-lwr-bd} has two summands to be estimated:
\begin{itemize}
\item[a)] $\phi(\nu_n, 0 ,\epsilon)$ is estimated using Corollary \ref{mild}, for which we need estimates for eigenvalues of $\nu_n$.
\item[b)] The bias term $\mathcal{K}(\nu_n, G(u_0), \epsilon)$ is calculated using Theorems \ref{non-cent-small-ball-1}, \ref{non-cent-small-ball-2} 
or \ref{non-cent-small-ball-3} depending on the examples to be discussed.
\end{itemize}
\item[2)] We then calculate appropriate $k_n$ and $\xi_n$ which satisfy Assumption \ref{assum2''}.
\item[3)] We calculate $g_n$ and check if $\epsilon_n$ and $\xi_n$ satisfy the Assumption \ref{assum3''}.
\item[4)] We check that $\xi_n$ satisfies Assumption \ref{eqn:u0-assum}.
\end{itemize}

We note here that all the above quantities except $g_n$ depend on the scale parameter $R_n$ 
which we shall choose so as to improve the contraction rate $\xi_n$.

 Before we start with explicit examples, we state a form of
$\emph{min-max}$ theorem (\cite{Barry}, Theorem $XIII$) which we shall be using in the examples.

 \begin{theorem}[Min-max Theorem] \label{thm:minimax} 
 Assume $\mathcal{L}$ is a compact, self adjoint and positive definite
operator on some Hilbert space $\calH$. Let its eigenvalues be $\alpha_j$
and $\alpha_j \geq \alpha_{j + 1}$. Then the following equalities hold
 \begin{itemize}
 \item $\alpha_j = \underset{\{S : \textnormal{dim} (S) = j\}}{\max} \ \underset{\{x :
\|x\| = 1, x \in S\}}{\min} \langle\mathcal{L}x,x\rangle_{\calH}$
 \item $\alpha_j = \underset{\{S : \textnormal{codim} (S) = j - 1\}}{\min} \
\underset{\{x : \|x\| = 1, x \in
S\}}{\max}\langle\mathcal{L}x,x\rangle_{\calH}$
 \item $\frac{1}{\alpha_j} = \underset{\{S : \textnormal{codim} (S) = j - 1\}}{\max} \
\underset{\{x : \|x\| = 1, x \in
S\}}{\min}\langle\mathcal{L}^{-1}x,x\rangle_{\calH}$
 \item $\frac{1}{\alpha_j} = \underset{\{S : \textnormal{dim} (S) = j\}}{\min} \
\underset{\{x : \|x\| = 1, x \in
S\}}{\max}\langle\mathcal{L}^{-1}x,x\rangle_{\calH}$
 \end{itemize}
 \end{theorem}

\subsubsection{Example 1: Deconvolution with Meyer wavelet priors}\label{sec:meyer}

The details of deconvolution problem with Meyer wavelet prior can be found in\cite{ray} and the references therein. 
We use Gaussian priors instead of uniform priors as in \cite{ray}. The specifics of model we use here are as follows.
\begin{itemize}
\item The noise $\eta$ is white.
\item The model \eqref{M} is mildly illposed. We assume that the exponent of ill posedness is $\alpha$, and the eigenbasis of $G$ is the standard fourier basis (denoted by $\{e_i\}$).
\item The Meyer wavelet basis (denoted by $\phi_i$) satisfies the property $$\left\langle\varphi_j,e_k\right\rangle_1 = 0$$ if $k \notin [ j/3 - 1, 2j
].$
\end{itemize}
We further assume that the prior $\mu_n$ is Gaussian with covariance operator
$\calC_n$ and eigenpair $\{\varphi_j, \frac{\lambda_j}{R_n^2}\}$, where $\lambda_j = j^{-1 - 2\delta}$, and $R_n>0$, for $n\ge 1$, is a scale parameter.
\begin{theorem}
The model described above is well-posed. Further, the contraction rates $\xi_n$  for true solutions $u_0 \in \calH_1^{\gamma}(\calC_n)$ are given by the following expressions. 
$$\xi_n = 
                  \begin{cases}
                  n^{-\frac{\gamma}{1 + 2\alpha + 2\gamma}} & 2\gamma \leq   1+ 2\delta, R_n = n^{\frac{\gamma - \delta}{1 + 2\alpha + 2\gamma}}\\
                  n^{-\frac{2\delta + 1}{4(1 + \alpha + \delta)}} & 2\gamma > 1 + 2\delta, R_n = \frac{1}{4(1 + \alpha + \delta)}\\
                  \end{cases}$$
\end{theorem}
\begin{proof}
 since $\zeta = I$ and $G$ is continuous, the posterior is well-posed by Theorem \ref{thm:well-posedness}. It also follows that
\begin{equation}\label{eqn:gn-meyer}
g_n \leq (2n)^{2\alpha}.
\end{equation}

This makes $\nu_n$ a centered Gaussian measure on $\calH_2$ whose
covariance operator is $G\mathcal{C}_nG^T$
with eigenvalues $\{\alpha_{j,n}\}$.
For the sake of brevity, let us define $D_j^1$ and $D_j^2$ to be the span
of $\{e_1,e_2.....e_j\}$ and  $\{Ge_1,Ge_2.....Ge_j\}$ respectively. Next,
to obtain the eigenvalues $\{\alpha_{j,n}\}_{j\ge 1}$ of $G\mathcal{C}_n G^T$,
we apply the Min-max Theorem \ref{thm:minimax}
\begin{align*}
\alpha_{j,n} &= \sup_{C : C \subset \calH_2, \textnormal{dim} C = j} \ \inf_{x : x \in
C} \frac{\left\langle G\mathcal{C}_n G^T x,x\right\rangle_2}{\left\langle
x,x\right\rangle_2}\\
&\geq \inf_{x \in D_j^2} \frac{\left\langle \mathcal{C}_n G^T
x,G^Tx\right\rangle_1}{\left\langle
G^Tx,G^Tx\right\rangle_1}\frac{\left\langle GG^T
x,x\right\rangle_2}{\left\langle x,x\right\rangle_2}\\
&\geq \rho_j^2 \inf_{x \in D_j^2} \frac{\left\langle \mathcal{C}_n G^T
x,G^Tx\right\rangle_1}{\left\langle G^Tx,G^Tx\right\rangle_1}\\
&=  \rho_j^2  \inf_{x \in D_j^1, \vert\vert x\vert\vert = 1} {\left\langle
\mathcal{C}_n x,x\right\rangle_1}
\end{align*}
For $x\in D^1_j$, let $x = \sum x_k \varphi_k$. Then, we have $x_k = 0$
for $k > 2j$, since $\left\langle\varphi_j,e_k\right\rangle_1 = 0$ if $k
\notin [ j/3 - 1, 2j ]$. This implies
\begin{equation*}
\inf_{x \in D_j^1, \vert\vert x\vert\vert = 1} {\left\langle
\mathcal{C}_n x,x\right\rangle_1} = \sum_{k = 1}^{2j}
\frac{\lambda_k}{R_n^2} x_k^2 \geq \frac{\lambda_{2j}}{R_n^2}.
\end{equation*}
Hence, we have $\alpha_{j,n} \geq \rho_j^2 \frac{\lambda_{2j}}{R_n^2} \geq
C_1 \rho_j^2 \frac{\lambda_j}{R_n^2}$. In an exactly similar fashion, 
by using the other half of Min-max theorem, we can show that
$\alpha_{j,n} \leq C_2 \rho_j^2 \frac{\lambda_j}{R_n^2}$. Then as a simple application of 
Corollary \ref{mild}, we have
\begin{equation}\label{eqn:phiEx1}
\phi(\nu_n,0,\epsilon) \leq C(R_n\epsilon)^{-\frac{1}{\alpha + \delta}}
\end{equation} 
for some $C > 0$.

Moving onto estimating the bias term, assume $u_0 \in \calH_1^{\gamma}(\calC_n)$  and let
$\sum_i \left(\frac{\left\langle u_0,\varphi_i\right\rangle}{i^{\gamma}}\right)^2 = R$.  
The Hilbert scale $\calH_1^{\gamma}(\calC_n)$ of order $\gamma$ is defined with respect to the eigenbasis of the operator $\calC_n$. 
Then, by Corollary \ref{non-cent-small-ball-1}, we have
\begin{equation}\label{eqn:biasEx1}
\mathcal{K}(\nu_n, G(u_0), \epsilon) \leq CR_n^2\left(\epsilon^{-\frac{1
+ 2\delta -2\gamma}{\alpha + \gamma}}\vee1\right).
\end{equation}

Next, to get the optimum $\epsilon_n$, we first minimise
$\phi(\nu_n,G(u_0),\epsilon)$ by fixing the value of $R_n$. However, we also need to take care of the fact that $n\epsilon_n^2 \to \infty$ as $n \to \infty$. Note that
$\phi(\nu_n,0,\epsilon)$ decreases with increasing $R_n$ while
$\mathcal{K}(\nu_n, G(u_0), \epsilon)$ increases with increasing $R_n$. We
choose $R_n$ so that both the terms are of the same order. Setting $R_n =
\epsilon_n^{\beta}$, and for $2\gamma \leq 1 + 2\delta$, and comparing the right hand sides of
\eqref{eqn:phiEx1} and \eqref{eqn:biasEx1}, we must have
$$\beta = \frac{\delta - \gamma}{\alpha + \gamma}$$
 Making the substitution we see that $$\phi(\nu_n,G(u_0),\epsilon_n) \approx
\epsilon_n^{-\frac{1}{\alpha + \gamma}}.$$ Then, to get $\epsilon_n$, we set
 \begin{eqnarray*}
 \phi(\nu_n,G(u_0),\epsilon_n) &=& n\epsilon_n^2\\
 \implies \epsilon_n &=& n^{-\frac{\alpha + \gamma}{1 + 2\alpha + 2\gamma}}.
 \end{eqnarray*}
Similarly, for $\gamma > 1 + 2\delta$, we have $\epsilon_n = n^{-\frac{1 +
2\alpha + 2\delta}{4(1 + \alpha + \delta)}}$. 

We next aim to estimate $k_n$ and $\xi_n$. Following the proof of
equation 5.5 in \cite{ray}, we obtain the expression $$\mathbb{P}\left(\|u -
P_{k_n}u\| \geq \frac{k_n^{-\delta}(1 +
\sqrt{2Ln\epsilon_n^2k_n^{-1}})}{R_n}\right) \leq
\exp^{-Ln\epsilon_n^2}.$$ Substituting $k_n = n\epsilon_n^2$, we have
$$\mathbb{P}\left(\|u - P_{k_n}u\| \geq \frac{L'
k_n^{-\delta}}{R_n}\right) \leq \exp^{-Ln\epsilon_n^2}$$ for some $L'>0$. 
Hence, $\xi_n = \frac{k_n^{-\delta}}{R_n}$ satisfies
the Assumption \ref{assum2''}, and we have $\xi_n = n^{-\frac{\gamma}{1 +
2\gamma + 2\alpha}}$ for $\gamma \leq 1 + 2\delta$ and $\xi_n =
n^{-\frac{2\delta + 1}{4(1 + \alpha + \delta)}}$ for $\gamma > 1 +
2\delta$. 

Given \eqref{eqn:gn-meyer} and expressions for $\xi_n$, $k_n$ and
$\epsilon_n$, it is straightforward to check that Assumption
\ref{assum3''} is satisfied. Hence, $\xi_n$ are the contraction rates.
\end{proof}

\begin{remark}
Note that minimax rates for this problem are known to be 
$\xi_n = n^{-\frac{\gamma}{1 + 2\gamma + 2\alpha}}$ (see \cite{Belister}), 
and our contraction rates match this rate only when $\gamma \leq 1 + 2\delta$.
\end{remark}

\subsubsection{Inversion with linear perturbations}\label{sec:G without SVD}

Continuing withe same theme of model \eqref{M}, we discuss our next class of examples which is based 
around examples discussed in \cite{agapiou} and strictly contains them.

We allow the noise to be colored with covariance $\zeta$. Before we begin the analysis, we shall
note two results from functional analysis.
\begin{theorem}\label{thm:functional1}
Let $K$ and $G_1$ be bounded linear operators defined on a separable Hilbert space, such that they 
are positive and positive definite, respectively. In addition, if $G_1 K$ is assumed to be compact, then $(\I + G_1K)^{-1}$ is continuous.
\end{theorem}

Proof follows by applying Theorem $7$, Section $27$ of \cite{Spectral}, and observing that $G_1K$ cannot have $(-1)$ as an eigenavlue.

\begin{theorem}\label{thm:functional2}
Let $G_1$ or $G_1^{-1}$ be positive definite, compact and self adjoint. Then, $\|G_1^p(x)\|^q \geq \|G_1^q(x)\|^p$ for $p \geq q > 0$ and $\|x\| = 1$. 
\end{theorem}
The result can be readily proved by applying Jensen's inequality.

For the examples to be discussed in this section, we set $\calH_1 = \calH_2 = \calH$. 
Let $G$ be compact, self adjoint and positive definite.
Let the covariance operator of noise be $\zeta \equiv \left(G^{-r} +
K_1\right)^{-2}$ for $ r\in (0,1)$, and $K_1$ be a continuous,
self-adjoint and positive operator on $\calH$. In case where $r = 0$, we take $K_1 = 0$, which
corresponds to the case of white noise.
We shall pick Gaussian priors with covariance operators
$\mathcal{C}_n \equiv \frac{\left(G^{-t} + K_2\right)^{-l}}{R_n^2}$ for $t > (1-r)$ and $l\leq 2$. 
We choose $l$ and $t$ such that $G^{lt}$ is trace class. We will show below that $\mathcal{C}_n$ is a viable covariance operator. 
\begin{lemma}
$\mathcal{C}_n$, defined above, is a self adjoint, positive definite and trace class operator.
\end{lemma}
\begin{proof}
Since $G^{-t}$ is positive definite and $K_2$ is positive, it then follows that $\mathcal{C}_n$ is positive definite. Further, $\mathcal{C}_n$ is self adjoint since $G^{-t}$ and $K_2$ are self adjoint. We now need to show that $\mathcal{C}_n$ or equivalently $\left(G^{-t} + K_2\right)^{-l}$ is trace class. We have, $$\left(G^{-t} + K_2\right)^{-l} = \left((\I + G^{t}K_2)^{-1}G^t\right)^l.$$
Since $\left(\mathcal{I} + G^{t}K_2\right)^{-1}$ is continuous by Theorem \ref{thm:functional1}, $\left((\I + G^{t}K_2)^{-1}G^t\right)^l$ is compact.

Let $\{x_i\}$ be the eigenvectors of $G$. It will suffice to show that $$\sum \left\langle\left((\I + G^{t}K_2)^{-1}G^t\right)^l x_i,x_i\right\rangle < \infty.$$
Since $\left((\I + G^{t}K_2)^{-1}G^t\right)^l$ is compact, self adjoint and positive definite, we have $$\sum \left\langle\left((\I + G^{t}K_2)^{-1}G^t\right)^l x_i,x_i\right\rangle = \sum \|\left((\I + G^{t}K_2)^{-1}G^t\right)^{\frac{l}{2}} x_i\|^2.$$
Further, by Theorem \ref{thm:functional2}, we have $$\sum \|\left((\I + G^{t}K_2)^{-1}G^t\right)^{\frac{l}{2}} x_i\|^2 \leq \sum \|\left((\I + G^{t}K_2)^{-1}G^t\right) x_i\|^l.$$
Let $\rho_i$ be the eigenvalues of $G$ and $\|(\I + G^{t}K_2)^{-1}\| = R_3$. Then we have $$\sum \|\left((\I + G^{t}K_2)^{-1}G^t\right) x_i\|^l \leq R_3^l\sum\rho_i^{lt} < \infty$$
which follows as consequence of our assumption of $G^{lt}$ being trace class.
\end{proof}

This makes $\mathcal{C}_n$ a viable covariance operator for a Gaussian prior supported on $\mathcal{H}$
Further, assume that there exist positive constants 
$\alpha$ and $\delta$ such that  $\rho_j^{1 -r} = j^{-\alpha}$ and $\rho_j^{lt} = j ^{-1 - 2\delta}$.

\begin{theorem}
The model described above is well-posed. Further, the contraction rates $\xi_n$ for $u_0 \in \calH_1^{\gamma}(\calC_n)$ are given by the following expressions. 
\begin{equation}
\xi_n = \left\{ 
\begin{array}{lc} 
n^{-\frac{\gamma'}{1 + 2\alpha + 2\gamma'}} & 2\gamma' \leq   1+ 2\delta, R_n = n^{\frac{\gamma - \delta}{1 + 2\alpha + 2\gamma}}\\
n^{-\frac{2\delta + 1}{4(1 + \alpha + \delta)}} & 2\gamma' > 1 + 2\delta, R_n = \frac{1}{4(1 + \alpha + \delta)}
\end{array}
\right.
\end{equation}
where $\gamma' = \frac{\gamma(1 - r)}{t}$.
\end{theorem}
\begin{proof}We shall begin with estimating $\mu_n \{ u : \left\|G(u) -G(u_0)\right\|_{\zeta} \leq \epsilon \}$. Note that 
$\zeta^{-1/2}G = (\I + K_1G^r)G^{-r + 1}$. Since  $r < 1$, this implies that $\zeta^{-\frac{1}{2}}G$ is continuous, then
well-posedness of the posterior follows by theorem \ref{thm:well-posedness}.

Next, writing $\|(\I + K_1G^r)\| = R_1$ observe that

\begin{eqnarray*}
\|G^{-r + 1} (u - u_0)\| \leq \epsilon &\implies& \|\left(\I +
K_1G^r\right)G^{-r + 1} (u - u_0)\| \leq R_1\epsilon\\
\implies \{u : \|G^{-r + 1} (u - u_0)\| \leq \epsilon\} &\subset& \{u : \|\left(\I + K_1G^r\right)G^{-r + 1} (u - u_0)\| \leq R_1\epsilon\}.
\end{eqnarray*}

Hence a lower bound for $\mu_n \{ u : \left\|G^{-r+1}(u-u_0)\right\| \leq
K\epsilon \}$ will also be a lower bound for $\mu_n \{ u : \left\|G(u) -
G(u_0)\right\|_{\zeta} \leq \epsilon \}$.
Therefore, it suffices to find a sequence $\epsilon_n$ for the operator
$G^{-r + 1}$ instead of the operator $\zeta^{-1/2}G$. The computations
involved in finding such $\{\epsilon_n\}$
will depend on the choice of our prior $\mu_n$. Assume that $\mathcal{C}_n$ has eigenbasis $\{\varphi_j\}$ with 
$\{\frac{\lambda_j^2}{R_n^2}\}$ the corresponding eigenvalues.

As before, let $\nu_n$ be the push forward of the prior $\mu_n$ with respect to the operator $G^{-r + 1}$. Let the 
eigenvalues of the covariance operator of $\nu_n$, given by $G^{-r + 1}\mathcal{C}_nG^{-r + 1}$, be $\{\alpha_{j,n}\}$. 
Let the eigenpair of $G$ be $\{e_j, \rho_j\}$. Further, let $D^*_j$ be the span of
$\{e_j,e_{j + 1}......\}$. Then, by the second part of Theorem \ref{thm:minimax}, we have
\begin{align*}
\alpha_{j,n} &= \frac{1}{R_n^2} \min_{\{S : \textnormal{codim} (S) = j - 1\}}\max_{\{x\in S : \|x\| = 1\}}
\langle G^{-r + 1}\left(G^{-t} + K_2\right)^{-l}G^{-r + 1}x,x\rangle\\
&\leq  \frac{1}{R_n^2}\max_{\{x : \|x\| = 1, x \in D^*_j\}}\frac{\left\langle \left(G^{-t} +
K_2\right)^{-l}G^{-r + 1}x,G^{-r + 1}x\right\rangle}{\langle G^{-r + 1}x,G^{-r +
1}x\rangle} \max_{\{x : \|x\| = 1, x \in D^*_j\}} \langle G^{-2r +
2}x,x\rangle\\
&= \frac{\rho_j^{2 -2r}}{R_n^2} \max_{\{x : \|x\| = 1, x \in D^*_j\}}\left\langle
\left(\left(G^{-t} + K_2\right)^{-1}\right)^{l}x,x\right\rangle\\
&= \frac{\rho_j^{2 -2r}}{R_n^2} \max_{\{x : \|x\| = 1, x \in D^*_j\}}
\left\|\left(\left(G^{-t} + K_2\right)^{-1}\right)^{l/2}x \right\|^2\\
&\leq  \frac{\rho_j^{2 -2r}}{R_n^2} \max_{\{x : \|x\| = 1, x \in D^*_j\}}
\left\|\left(\left(G^{-t} + K_2\right)^{-1}\right)x\right\|^l \ \ \ \
\,\,\,\text{(by Theorem \ref{thm:functional2} and $l\le 2$)}\\
&= \frac{\rho_j^{2 -2r}}{R_n^2} \max_{\{x : \|x\| = 1, x \in D^*_j\}} \left\|\left(\I +
G^tK_2\right)^{-1}G^tx \right\|^l\\
&\leq  \frac{\rho_j^{2 -2r}}{R_n^2} \left\|\left(\I + G^tK_2\right)^{-1}\right\|^{l}
\left\|\left.G^t\right|_{D^*_j}\right\|^{l}\\
&\leq  C_1\frac{\rho_j^{2 -2r + lt}}{R_n^2}.
\end{align*}
Similarly, applying the fourth part of Theorem \ref{thm:minimax}, we see
that $\alpha_{j,n} \geq C_2\rho_j^{2 - 2r + lt}$. Then we have $\alpha_{j,n} \approx \frac{j^{-(1 + 2\alpha + 2\delta)}}{R_n^2}$.

Applying Corollary \ref{mild}, we have $$\phi(\nu_n,0,\epsilon) \geq \exp(-(C_3 R_n \epsilon)^{-\frac{1}{\alpha+ \delta}}).$$
Let us take $u_0 \in \calH^{\gamma}(\mathcal{C}_n)$ and let 
$\sum_i \left(\frac{\left\langle u_0,\varphi_i\right\rangle}{i^{\gamma}}\right)^2 = R$. 
Setting $\gamma' = \frac{\gamma(1 - r)}{t}$, by Theorem \ref{non-cent-small-ball-2}, 
$$\mathcal{K}(\nu_n, u_0, \epsilon) \leq KR_n^2\left(\epsilon^{-\frac{1 + 2\delta - 2\gamma'}{\alpha + \gamma'}}\vee 1\right).$$ 
From here on, we can repeat the calculations of the last example and obtain $\xi_n$ and $k_n$ corresponding to $\gamma'$ instead of $\gamma$. 
Next, we need to check if the sequences $\xi_n$, $k_n$ and $\epsilon_n$ satisfy Assumption \ref{assum3''}, ensuring that the sequence $\xi_n$ 
indeed is the rate of contraction, though a suboptimal one due to the smaller exponent $\gamma'$.

We need to show that $g_{k_n} \leq
C_1(\frac{\xi_n}{\epsilon_n})^2$. For this, we
estimate $g_{n}$. Defining $S_1 = \|(\I + G^rK_1)^{-1}\|$ and $S_2 = \|(\I -
\mathcal{C}_n^{\frac1{l}}K_2)\|$, using Theorem \ref{thm:functional2}, 
for $h \in \text{span} \{\varphi_1,..,\varphi_n\}$ with $\|h\|=1$,
we have the following\\

Observe, first that we can write 
$$ \|\zeta^{1/2}G^{-1}h\| = \|(\I + G^rK_1)^{-1}G^{r - 1}h\| =  \|(\I +
G^rK_1)^{-1}(\mathcal{C}_n^{-1/l} - K_2)^{\frac{1 - r}{t}}h\|$$
Then, setting $S_1 = \|(\I + G^rK_1)^{-1}\|$ and $S_2 = \|(\I -
\mathcal{C}_n^{\frac1{l}}K_2)\|$ and using Theorem \ref{thm:functional2}, 
for $h \in \text{span} \{\varphi_1,..,\varphi_n\}$ with $\|h\|=1$
we have 
$$ \|\zeta^{1/2}G^{-1}h\| \leq S_1 S_2^{\frac{1 - r}{t}} \|\mathcal{C}_n^{-1/l}h\|^{\frac{1 - r}{t}} \leq 
S_1 S_2^{\frac{1 - r}{t}}\rho_n^{r - 1}.
$$
Now recalling the definition of $g_n$ from \eqref{eqn:def-gr1}, it follows that Assumption \ref{assum3''} is indeed satisfied. Lastly, the assumption stated in \eqref{eqn:u0-assum} 
can easily be checked making $\xi_n$ the contraction rates.
\end{proof} 

\begin{remark}
Note that we can execute the above computations for Example \ref{ex:mild} as well. Also notice that in this case, we shall get $\gamma' = \gamma$
and hence will get the same rate of contraction as in the conjugate case
with prior $\calC$ and linear operator $G = \calC^{l-r}$. We skip the details here since we will be doing very similar calcuations in the subsection below on severely ill posed problems. We also note here that the above examples achieve the third part in Subsection \ref{contribution}.
\end{remark}

\subsubsection{Contraction rate for severely ill posed problems}\label{severely ill posed}
In this subsection, we shall discuss well-posedness and contraction rates for severely ill posed problems. We now describe the class of examples we deal with here. 

We have $\calH_1 = \calH_2 = \calH$. We define the relevant operators in terms of the covariance of the prior $\mu_n$, denoted by $\mathcal{C}_n = \frac{\mathcal{C}}{R_n^2}$ with eigenpair $\{\phi_j, \frac{\lambda_j}{R_n^2}\}$.
Let the covariance operator of noise be $\zeta \equiv \left(\mathcal{C}^{-r} +
K_1\right)^{-2}$ for $ r \geq 0$, and $K_1$ be a continuous,
self-adjoint and positive operator on $\calH$. In case where $r = 0$, we take $K_1 = 0$. Let the operator to be inverted be $G \equiv (\exp (\mathcal{C}^{-\beta'}) + K_2)^{-1}$ for $\beta'>0$ and $K_2$ a continuous,
self-adjoint and positive operator on $\calH$. We know that $\exp (\mathcal{C}^{-\beta'})$ is well defined with eigenpair $\{\phi_j, \exp (\lambda_j^{-\beta'})\}$. 
We set $\beta,\delta>0$ such that $\lambda_j = j^{-1 - 2\delta}$ and also $\lambda_j^{-\beta'} = j^{\beta}$.

\begin{theorem}
The model described above is well-posed. Further, the contraction rate $\xi_n$ for $u_0 \in \calH^{\gamma}(\mathcal{C}_n)$ is given by 
$$\xi_n = (\log n)^{-\frac{\gamma}{\beta}}$$ with $R_n = n^{\frac{1}{2} - \sigma}$ for any $0 < \sigma < \frac{1}{2}$.
\end{theorem}
\begin{proof} We see that $G$ and $\zeta$ are positive definite, compact and self adjoint, hence $G$ has an eigenbasis and $\zeta$ is a viable covariance operator for a noise measure. 
We prove the statement for $G$, the result for $\zeta$ can be proved in a similar fashion. $G$ is clearly positive and self adjoint. 
Further, we have $$G = (\mathcal{I} + \exp (-\mathcal{C}^{-\beta'})K_2)^{-1}\exp (-\mathcal{C}^{-\beta'}).$$ 
From Theorem \ref{thm:functional1}, it follows that $G$ is compact.

We shall begin by estimating $\mu_n \{ u : \left\|G(u) -
G(u_0)\right\|_{\zeta} \leq \epsilon \}$. We have $$\zeta^{-1/2}G = (\I +
K_1\mathcal{C}^r)(\I + \mathcal{C}^{-r}\exp(-\mathcal{C}^{-\beta'})K_2\mathcal{C}^r)^{-1}\mathcal{C}^{-r}\exp (-\mathcal{C}^{-\beta'}).$$ 
Note that $T\equiv (\I + \mathcal{C}^{-r}\exp(-\mathcal{C}^{-\beta'})K_2\mathcal{C}^r)^{-1}$ is continuous by Theorem \ref{thm:functional1}.
Writing $\|(\I + K_1\mathcal{C}^r)T\| = R_1$, we have

\begin{eqnarray*}
\|\mathcal{C}^{-r}\exp (-\mathcal{C}^{-\beta'}) (u - u_0)\| \leq \epsilon &\implies& \|(\I +
K_1\mathcal{C}^r)T \mathcal{C}^{-r}\exp (-\mathcal{C}^{-\beta'}) (u - u_0)\| \leq R_1\epsilon\\
\implies \{u : \| \mathcal{C}^{-r}\exp (-\mathcal{C}^{-\beta'})(u - u_0)\| \leq \epsilon\} &\subset& \{u : \|\zeta^{-1/2}G (u - u_0)\| \leq R_1\epsilon\}.
\end{eqnarray*}
Hence a lower bound for $\mu_n \{ u : \left\|\mathcal{C}^{-r}\exp (-\mathcal{C}^{-\beta'})(u-u_0)\right\| \leq
K\epsilon \}$ will also be a lower bound for $\mu_n \{ u : \left\|G(u) -
G(u_0)\right\|_{\zeta} \leq \epsilon \}$. The calculations also imply that $\zeta^{-\frac{1}{2}}G$ is continuous 
and hence the posterior is well-posed by Theorem \ref{thm:well-posedness}. It will thus suffice to calculate 
$\epsilon_n$ for the operator $\mathcal{C}^{-r}\exp (-\mathcal{C}^{-\beta'})$. The pushforward $\nu_n$ of 
$\mu_n$ under the operator $\mathcal{C}^{-r}\exp (-\mathcal{C}^{-\beta'})$ has eigenpair 
$\{\phi_j, \lambda_j^{1 - 2r}\exp(-2\lambda_j^{-\beta'})\}$. 

Applying Corollary \ref{severe}, we have $$\phi(\nu_n,0,\epsilon) \leq C_5 (- \log(\epsilon R_n))^{\frac{\beta + 1}{\beta}}.$$
By Theorem \ref{non-cent-small-ball-3}, we have $$\mathcal{K}(\nu_n, G(u_0), \epsilon) \leq R R_n^2
\left((-\log (\epsilon))^{\frac{-2\gamma + 1 + 2\delta}{\beta}}\vee 1\right).$$
By setting $R_n = n^{\frac{1}{2} - \sigma}$ for $0 < \sigma < \frac{1}{2}$, we notice that the bias term $\mathcal{K}(\nu_n, G(u_0), \epsilon)$ dominates the expression for
$\phi(\nu_n,G(u_0),\epsilon)$, implying that we need $\epsilon_n$ such that $$k n\epsilon_n^2 \leq n^{1 - 2\sigma}
\left((-\log (\epsilon_n))^{\frac{-2\gamma + 1 + 2\delta}{\beta}}\right)$$ for some $k > 0$. 

It is not difficult to see that $\epsilon_n = \frac{(\log n)^{\frac{-2\gamma + 1 + 2\delta}{2\beta}}}{n^{\sigma}}$ satisfies the above inequality. 
Next, we choose $k_n \leq k_2(\log n)^{\frac{1}{\beta}} < n\epsilon_n^2$ for some $k_2$ to be chosen later.
As in the example discussed in Section \ref{sec:meyer}, we have $$\mathbb{P}\left(\|u -
P_{k_n}u\| \geq \frac{k_n^{-\delta}(1 +
\sqrt{2Ln\epsilon_n^2k_n^{-1}})}{R_n}\right) \leq
\exp^{-Ln\epsilon_n^2}.$$
Putting in the expressions for $k_n$ and $\epsilon_n$, we have $$\mathbb{P}\left(\|u -
P_{k_n}u\| \geq C(\log n)^{\frac{-\gamma}{\beta}}\right) \leq
\exp^{-Ln\epsilon_n^2}$$ for some $C > 0$.
Hence, $\xi_n = (\log n)^{\frac{-\gamma}{\beta}}$ is our candidate for rate of contraction. Given the expressions of $\xi_n,\epsilon_n,k_n$, we need to estimate $g_k$ to check if condition \ref{assum3''} is satisfied.

Noting that $\zeta^{\frac{1}{2}}$ is continuous, we have for $h \in span\{\phi_1,....\phi_k\}$ and some $k,k_1 > 0$,
\begin{eqnarray*}
\|\zeta^{\frac{1}{2}}G^{-1}h\| &\leq& k \|\left(\exp(\mathcal{C}^{-\beta'}) + K_2\right)h\|\\ &\leq& k_1 \exp(k^{\beta}).
\end{eqnarray*}
Thus, $g_{k_n} = n^{k_2^{\beta}}$. Hence, condition \ref{assum3''} is satisfied for 
$k_2 < \sigma^{\frac{1}{\beta}}$. Further, it is easy to check that the true solution $u_0$ satisfies the condition
stated in \eqref{eqn:u0-assum} implying that $\xi_n = (\log n)^{\frac{-\gamma}{\beta}}$ is the contraction rate.
\end{proof}

\begin{remark}
 The rate can be reproduced in the case when the prior and operator basis are related as in the first example using similar calculations. 
 Note that this is also the minimax rate and has been achieved using scalable priors which do not depend on the true solution. 
 Similar results have been obtained by \cite{Knapik} and \cite{Zhang} for conjugate case. This achieves the fourth part in Subsection \ref{contribution}.
\end{remark}

\subsubsection{Inverting Semilinear operator}\label{sec:semilinear}

The model  here is
\begin{equation}\label{semi}
    y = G^{*}(u) + \frac{1}{\sqrt{n}} \eta.
\end{equation}
Here, $G^{*}$ is a one-one semilinear
and continuous map $G^{*}: \mathcal{H}_1 \to \mathcal{H}_2$.
We rewrite the above as
\begin{equation*}
    y = G(u^{*}) + \frac{1}{\sqrt{n}}\eta,
\end{equation*}
where $u^{*} = G^{-1}G^{*}(u)$ where $G$ is an injective, compact and linear map. 
We assume that $u^{*}$ is an bijective
continuous transform with continuous inverse. We are now in a position to use Lemma \ref{corollary1}.

 To restate, we need
following conditions to be satisfied to get contraction rate for the model \eqref{semi}
 \begin{eqnarray*}
 \mu_n\{u : \|P_{k_n}(u^{*}) - u^{*}\| \geq C_2\xi_n\} &\leq& \exp(-(C +
4)n\epsilon_n^2)\\
    \mu_n\{u : \|G(u^{*}) - G(u_0^{*})\| \leq k\epsilon_n\} &\geq&
\exp(-Cn\epsilon_n^2)\\
    \|P_{k_n}(u_0^{*}) - u_0^{*}\| \leq \xi_n.
 \end{eqnarray*}
Since by the continuity properties of the transformation, we have $\{u :
\|u - u_0\| \leq k_1\epsilon\} \subset \{u : \|u^{*} - u_0^{*}\| \leq
\epsilon\} \subset \{u : \|u - u_0\| \leq k_2\epsilon\}$, we can replace
the second condition by
\begin{equation*}
  \mu_n\{u : \|G(u) - G(u_0)\| \leq k\epsilon_n\} \geq
\exp(-Cn\epsilon_n^2).
\end{equation*}

We now consider a specific example of $G^{*}$ and prior for which we can
verify the above conditions. Let $\Delta$ be the Laplacian and $G^{*} =
(\Delta + k \sin)^{-1}$ with $k < \frac{1}{8}$. Note that $G^{*}$ is the
sine-Gordon equation for $k =1$. Next, writing $\{e_i\}$ for eigenvectors of the Laplacian (on an appropriate bounded interval)
we define $\mathcal{H}$ as
\begin{equation*}
\mathcal{H} = \{u : u = \sum a_i e_i : \sum (i a_i)^2 < \infty\}.
\end{equation*}
We take the covariance of the prior to be $\mathcal{C} = \Delta^{-\delta'}$ 
with $\delta' > \frac{3}{2}$ so as to ensure that the prior is well supported on $\mathcal{H}$. 
We set $G = \Delta^{-1}$, and observe that under the norm of $\mathcal{H}$, the smoothness of $G$ is 
$\alpha = 1$ and that of $\mathcal{C}$ is $2\delta' -2 \equiv 1 + 2\delta$. 

Finally, it is not difficult to notice that the map $G^{-1}G^{*} =
(\mathcal{I} + k \sin(\Delta^{-1}))^{-1}$,
is a continuous bijection with continuous inverse.

\begin{theorem}
The contraction rate $\xi_n$ for the above model and true solution $u_0 \in \mathcal{H}^{\gamma}(\mathcal{C})$is given by 
$$\xi_n = n^{-\frac{\gamma\wedge \delta}{2\alpha + 2\delta + 1}}.$$ 

\end{theorem}
We will
now use Theorem 3.2.1 of \cite{Transformation} to
check the first condition. In this example, we have $u = k \sin
(\Delta^{-1})$. Due to conflict of notation, we will rather use $w = k
\sin (\Delta^{-1})$. It is easy to check that  $\nabla w(u) = k \cos u
(\Delta^{-1})$  and that it satisfies $\|\nabla w\| \leq k < 1$. Further,
since the operator ($k \cos$) is bounded on $\mathcal{H}$ and the inverse of Laplacian
has bounded Hilbert-schmidt norm, $w$ satisfies conditions in Theorem 3.2.1 of \cite{Transformation},
thus
\begin{eqnarray*}
&& \mu_n\{u : \|P_{k_n}(u^{*}) - u^{*}\| \geq C_2\xi_n\} \\
&=& \int_{\mathcal{H}} \mathcal{I}_{\{u : \|P_{k_n}(u) - u\| \geq C_2\xi_n\}}
\left|\det_2 (\mathcal{I}_{cm} + \nabla w)\right| \exp \left(-\delta(w) -
\frac{|w|_{cm}^2}{2}\right) d\mu(u).
\end{eqnarray*}
$\det_2$ is the Carleman-Fredholm determinant and the subscript $cm$
denotes the Cameron martin space/norm. Hence $\det_2(\mathcal{I}_{cm} +
\nabla w)$ is the Carleman-Fredholm determinant of the operator
$\mathcal{I}_{cm} + \nabla w$ on the Cameron-Martin space of the measure
$\mu_n$. The determinant exists and is uniformly bounded for all $u$ since
$\Delta^{-1}$ is Hilbert-Schmidt and $\cos u$ is uniformly bounded on $u$.
By Cauchy-Schwartz inequality, we have
\begin{eqnarray*}
&& \int_{\mathcal{H}} \mathcal{I}_{\{u : \|P_{k_n}(u) - u\| \geq C_2\xi_n\}}
\left|\det_2(\mathcal{I}_{cm} + \nabla w)\right| \exp \left(-\delta(w) -
\frac{|w|_{cm}^2}{2}\right) du\\
& \leq & \left(\int_{\mathcal{H}} \mathcal{I}_{\{u : \|P_{k_n}(u) - u\|
\geq C_2\xi_n\}} \left|\det_2(\mathcal{I}_{cm} + \nabla w)\right|^2
du\right)^{\frac{1}{2}}\left(\int_{\mathcal{H}} \exp \left(-2\delta(w) -
|w|_{cm}^2\right) du \right)^{\frac{1}{2}}\\
& \leq & R \left(\mu_n\{u : \|P_{k_n}(u) - u\| \geq
C_2\xi_n\}\right)^{\frac{1}{2}}\left(\int_{\mathcal{H}} \exp
\left(-2\delta(w) - |w|_{cm}^2\right) du \right)^{\frac{1}{2}}\\
& \leq & R \left(\mu_n\{u : \|P_{k_n}(u) - u\| \geq
C_2\xi_n\}\right)^{\frac{1}{2}}\left(\int_{\mathcal{H}} \exp
\left(2|\delta(w)|\right) du \right)^{\frac{1}{2}}.
\end{eqnarray*}
where $R$ is upper bound on the CarlemanFredholm determinant.
We can estimate the first half of last expression as before. For the
second half, we use Proposition B.8.1 from  \cite{Transformation}. Since Hilbert-Schmidt norm of $\nabla w$ is uniformly
bounded, the condition in Proposition B.8.1 is satisfied. Thus, we have
\begin{equation*}
   \mu_n\{u : \|P_{k_n}(u^{*}) - u^{*}\| \geq C_2\xi_n\} \leq R' R
\left(\mu_n\{u : \|P_{k_n}(u) - u\| \geq
C_2\xi_n\}\right)^{\frac{1}{2}}.
\end{equation*}
Now, with the replaced second condition, the conditions are reduced to the
conjugate case and we can get the appropriate contraction rates.

\subsection{Compactly supported prior}
Our model in this example will be similar to previous ones. Let $\{e_i\}$
be the basis of the prior and let $G'$ be a linear operator such that
$\{(\rho_i, e_i)\}$ be its eigenpair. We assume our linear operator to be
$G = (G'^{-1} + W)^{-1} = (\mathcal{I} + G'W)^{-1}G' = KG'$. Under the
compactly supported prior $\mu$, $u \in \mathcal{H}_1$ is given by
\begin{equation*}
u = \sum_k k^{-(\delta + \frac{1}{2})}u_k e_k.
\end{equation*}
Where $u_k$ are random variables supported on the interval $[-B,B]$. Further, we define Sobolev balls $\mathcal{H}^{\gamma}$ on the basis $\{e_i\}$.
\begin{theorem}
The contraction rate $\xi_n$ for the above model and true solution $u_0 \in \mathcal{H}^{\gamma}$is given by 
$$\xi_n = n^{\frac{-\delta}{2\alpha + 2\delta + 1}}$$ when $\delta < \gamma$.
\end{theorem}
As in the last example, it can be shown that the model satisfies
Assumption \ref{assum3''}. We need to now verify the Assumptions \ref{assum1} and \ref{assum2''}. Note that
\begin{eqnarray*}
\mu\left\{\left\|G(u - u_0)\right\| \leq \epsilon\right\} \geq  \mu\left\{\left\|G'(u -
u_0)\right\| \leq \frac{\epsilon}{\left\|K\right\|}\right\}.
\end{eqnarray*}

Since $\|K\|$ is a finite quantity, we can as well focus on $\mu\{\left\|G'(u - u_0)\right\| \leq \epsilon\}$.
Assume $u_0 = \sum k^{-(\gamma + \frac{1}{2})}b_k e_k$  with $\gamma \geq
\delta$. Note that $b_k \to 0$.
\begin{eqnarray*}
\mu\{\left\|G'(u - u_0)\right\| \leq \epsilon\} = \mu\left\{\sum k^{-2(\alpha +
\gamma + \frac{1}{2})}(k^{-(\delta - \gamma)}u_k - b_k)^2 \leq
\epsilon^2\right\}.
\end{eqnarray*}
Pick J such that $B^2J^{-2(\alpha + \delta)} \approx \epsilon^2$. Then we have
\begin{eqnarray*}
&& \mu\left\{\sum k^{-2(\alpha + \gamma + \frac{1}{2})}(k^{-(\delta - \gamma)}u_k
- b_k)^2 \leq \epsilon^2\right\} \\
&=& \mu\left\{\sum_1^{J} k^{-2(\alpha + \gamma +
\frac{1}{2})}(k^{-(\delta - \gamma)}u_k - b_k)^2 \leq \epsilon^2 - \sum_{J
+ 1}^{\infty} k^{-2(\alpha + \gamma + \frac{1}{2})}(k^{-(\delta -
\gamma)}u_k - b_k)^2\right\} \\
& \geq &  \mu\left\{\sum_1^{J} k^{-2(\alpha + \gamma +
\frac{1}{2})}(k^{-(\delta - \gamma)}u_k - b_k)^2 \leq c_1\epsilon^2\right\} \\
&\geq & \mu\left\{\max_{1 \leq k \leq J} k^{-2(\alpha + \gamma +
\frac{1}{2})}(k^{-(\delta - \gamma)}u_k - b_k)^2 \leq c_1\frac{\epsilon^2}{J}\right\} \\
&\geq & \prod_1^J \mu\left\{(k^{-(\delta - \gamma)}u_k - b_k)^2 \leq c_1\frac{\epsilon^2}{J}\right\}.
\end{eqnarray*}
We can see that
\begin{equation*}
\mu\{(k^{-(\delta - \gamma)}u_k - b_k)^2 \leq c_1\epsilon^2\} \geq
\frac{c_2\epsilon}{BJk^{\gamma - \delta}}.
\end{equation*}
Hence, we have
\begin{eqnarray*}
\mu\{\left\|G(u - u_0)\right\| \leq \epsilon\} \geq \prod_1^J
\mu\{(k^{-(\delta - \gamma)}u_k - b_k)^2 \leq c_1\epsilon^2\}\\ \geq \left(\frac{c_2\epsilon}{BJ}\right)^J (J!)^{\delta - \gamma} \\ \approx
\exp(-c_3 J \ln J) \approx \exp(-c_4\epsilon^{\frac{-1}{\alpha +
\delta}}\ln \epsilon).
\end{eqnarray*}
Thus, Assumption \ref{assum1} is satisfied with $\epsilon_n = (\frac{\ln
n}{n})^{\frac{\alpha + \delta}{2\alpha +2\delta + 1}}$.
Also, note that we have no convergence for $\gamma < \delta$ since the
prior does not assign any mass to sufficiently small balls around the true
solution. Next, we check Assumption \ref{assum2''}. Let $k_n \approx n\epsilon_n^2$,
then for $\xi_n = n^{\frac{-\delta}{2\alpha + 2\delta + 1}}$
\begin{eqnarray*}
\left\|u - P_{k_n}u\right\|^2 = \sum_{k_n}^{\infty} k^{-(2\delta +
1)}u_k^2 \leq c_4 B^2 k_n^{-2\delta} \leq c_5 \xi^2.
\end{eqnarray*}

Thus, Assumption \ref{assum2''} is satisfied. This gives the contraction rate to be
$\xi_n = n^{\frac{-\delta}{2\alpha + 2\delta + 1}}$.
This achieves the fifth part in Subsection \ref{contribution}.

\section{minimax rates}
We will now discuss the appropriate minimax rates for inverse problems with non conjugate priors. 
The usual minimax rates are calculated for true solutions lying in Sobolev
balls (ellipsoids) corresponding to the eigenbasis of the operator. Hence,
in the conjugate case, it makes sense to compare the convergence rates on
Sobolev balls to minimax rates since the Sobolev balls in both cases are
the same. In our case, however, we get rates for true solutions lying in
Sobolev balls corresponding to the prior basis which is different from the
operator basis. The Sobolev balls corresponding to the prior basis may be different as well.
 We show below that for Example \ref{ex:mild}, arbitrarily smooth Sobolev balls in prior basis may not belong to Sobolev ball of a fixed order in operator basis.

Let $\mathcal{H}$ be $\mathbb{L}^2_{per}[0,1]$, i.e the set of periodic
square integrable functions on $[0,1]$. Let operator $G$ in our model be
defined on $\mathcal{H}$ by $G = (\Delta + W)^{-1}$ where $\Delta$ is the
Laplace operator (acts on the elements via Fourier series) and $W$ is the
operator denoting multiplication by a continuous positive function $w$.
Further, let the covariance operator $\mathcal{K}$ of the prior be
$\Delta^{-1}$. Note that for $h \in \mathcal{H}$, we have
$G^{-1}\mathcal{K}h \in \mathcal{H}$. If the Sobolev spaces of second order smoothness corresponding
to the respective operators are the same, we must have
$G^{-2}\mathcal{K}^2h \in \mathcal{H}$. Simplifying, we get the equivalent
condition $\Delta w \Delta^{-2}h \in \mathcal{H}$. It is clearly false for
$h = \sin x$ and $w = 10 + \sum \frac{\sin 3kx}{k^2}$. Hence, the Sobolev
spaces of the true solutions in our case is clearly different from the
standard Sobolev spaces of the minimax rates. It is easy to apply the same argument to show that Sobolev balls of arbitrarily hgh smoothness in prior basis will not belong to second order Sobolev balls of Operator basis. Fortunately, it is elementary to calculate the minimax rates for the Sobolev balls in the examples we have used by the method of Belitser et. al. \cite{Belister}. We outline the idea below.
Define $y_k = \left\langle y,\tilde{\phi_k}\right\rangle = \left\langle
u,\phi_k\right\rangle + \frac{1}{\sqrt{n}}\left\langle
z,\tilde{\phi_k}\right\rangle$.  Note that $\tilde{\phi_k}$ is just $G^{-1}\phi$ and $\left\langle z,\tilde{\phi_k}\right\rangle$ are not independent. We
follow Theorem 1 of Belitser et.al. \cite{Belister} 
and let our estimator for $u_k =
\left\langle u,\phi_k\right\rangle$ be $x_ky_k$ for some $x_k$ to be
decided. The minimax risk is
\begin{equation*}
  r_n = \sum (1 -x_k)^2u_{0,k}^2 + \frac{x_k^2\|\tilde{\phi}_k\|^2}{n}
\end{equation*}
In the example we have used, $\|\tilde{\phi}_k\|^2$ serves the role of
$\sigma^2$ in \cite{Belister}. The rest of the proof is exactly
the same giving us the same minimax rates.

We also note below that we cannot obtain a uniform rate of convergence for priors of fixed smoothness (decay rate of eigenvalues) but varying eigenbasis and hence, it needs to be dealt on a case by case basis. 

Assume the model is $$y = G(u) + \frac{1}{\sqrt{n}}\eta$$
 where $u \in \mathcal{H}_1$ and $G : \mathcal{H}_1 \to \mathcal{H}_2$ is a compact operator such that $G^TG$ has eigenpair $\{e_i,\rho_i^2\}$. $\eta$ is the white noise on $\mathcal{H}_2$. The prior on $\mathcal{H}_1$ is a centered Gaussian measure $\mu_{\phi}$ supported on $\mathcal{H}_1$ with covariance operator given by $\mathcal{C}_{\phi}$. $\phi$ denotes the eigenbasis $\{\phi_i\}$ of $\mathcal{C}_{\phi}$. Assume that the eigenvalue of $\mathcal{C}_{\phi}$ corresponding to $\phi_i$ is $\lambda_i$.
\begin{claim}
Given any sequence $\xi_n \to 0$, we can find a $\{\phi_i\}$ such that the contraction rate of the above model is slower than $\xi_n$.
\end{claim}
\begin{proof}

Consider a sequence $\theta_i$ such that $\theta_i \to 0$ with $\theta_i > 0$ and $\theta_i \neq \theta_j$ when $i \neq j$. We shall now construct $\{\phi_i\}$ as a permutation of the basis $\{e_i\}$, that is $\phi_i = e_{\sigma(i)}$ for some permutation $\sigma : \mathbb{N} \to \mathbb{N}$. For $i = 2r - 1$, we put $\sigma(i) = 2k - 1$ where $k$ is the smallest integer such that $\rho_{2k - 1} \leq \theta_i$ and $2k - 1 \neq \sigma(j)$ for all $j < i$. We have thus injectively mapped all the odd integers into the odd integers . We can now map the even integers on the remaining natural numbers bijectively. Rewriting the original model in $\{\phi_i\}$ basis, we have the model $$y = G_1(u) + \frac{1}{\sqrt{n}}\eta$$ $u \in \mathcal{H}_1$ and $G_1 : \mathcal{H}_1 \to \mathcal{H}_2$ is a compact operator with eigenpair $\{\phi_i,\rho_{\sigma_i}\}$. The prior has covariance operator   $\mathcal{C}_{\phi}$ with eigenvalues $\{\lambda_i\}$.

Assume the posterior is denoted by $\mu_n^y$. $\beta_n$ is a rate of contraction if $\mathbb{Q}_{u_0,n}\mu_n^y\{u : \|u - u_0\|^2 \geq \beta_n^2\} \to 0$ as $n \to 0$. Opening the expression in $\{\phi_i\}$, we now have
\begin{eqnarray*}
\mathbb{Q}_{u_0,n}\mu_n^y\{u : \|u - u_0\|^2 \geq \beta_n^2\} &=& \mathbb{Q}_{u_0,n}\mu_n^y\{u : \sum_i(u_i - u_{0,i})^2 \geq \beta_n^2\}\\
&\geq& \mathbb{Q}_{u_0,n}\mu_n^y\{u : \sum_i(u_{2i - 1} - u_{0,2i - 1})^2 \geq \beta_n^2\}\\
\end{eqnarray*}
 Now consider the restricted model $$y = G_2(u) + \frac{1}{\sqrt{n}}\eta$$  with $u \in \mathcal{H'}_1$. $G_2 : \mathcal{H'}_1 \to \mathcal{H}_2$ 
 is a compact operator with eigenpair $\{\phi_{2i - 1},\rho_{\sigma(2i - 1)}\}$. $\mathcal{H'}_1$ is the closed subspace of $\mathcal{H}_1$ 
 spanned by $\{\phi_{2i - 1}\}$. The prior has covariance operator $\mathcal{C}$ with eigenpair $\{\phi_{2i - 1}, \lambda_{2i - 1}\}$. From 
 the previous calculation, it is clear that if $\{\beta_n\}$ is a contraction rate for the model, it is also a contraction rate for the restricted model. 
 Thus $\{\beta_n\}$ is slower than the minimax rate for the restricted model. We now note that $\theta_i > \rho_{\sigma(2i - 1)}$. It is intuitively 
 clear that the minimax rate can be made arbitrarily slow by making $\theta_i \to 0$ at a fast enough rate. Hence, the proof is complete.
\end{proof}

\appendix
\section{Appendix}
\label{sec:app}
 \begin{theorem}\label{cent-small-ball}
Assume we have a centered Gaussian measure $\nu_n$ on a Hilbert space $\mathcal{H}$ with eigenvalues of the covariance operator 
given by $\frac{k_i p_i^2}{R_n^2}$ with $p_i^2 > p_{i + 1}^2 > 0$ and $ 0 < C_1 < k_i < C_2$. 
 Then we have, $$\phi(\nu_n,0,\epsilon) \leq - \sum_{i + 1}^N (\log p_N - \log p_i)$$ whenever $N$ is such 
 that $$\sum_{i = N + 1}^{\infty} p_i^2 <  \frac{R_n^2 \epsilon^2}{2C_2} ,\,\,\,\,\,\,\,\, \text{ and } \,\,\,\,\,\,\,\, N < K_1 \left(\frac{\epsilon R_n}{p_N}\right)^2$$ for some $K_1 > 0.$ 
 \end{theorem}
\begin{proof} We start with 
\begin{equation*}
\phi(\nu_n,0,\epsilon) = - \log \left(\mathbb{P}\left(\sum v_i^2 \leq \epsilon^2\right)\right)
\end{equation*}
where $v_i$ are independent Gaussian random variables with mean $0$ and variance $\frac{k_i p_i^2}{R_n^2}$. Using independence, we have,
\begin{eqnarray}
\mathbb{P}\left(\sum v_i^2 \leq \epsilon^2\right) &\geq& \mathbb{P}\left(\sum_{i = 1}^{N} v_i^2 \leq \frac{\epsilon^2}{2}\right)\mathbb{P}\left(\sum_{i = N + 1}^{\infty} v_i^2 \leq \frac{\epsilon^2}{2}\right)
\end{eqnarray}
For the second term on the right hand side, we apply Chebyshev's inequality to obtain
\begin{eqnarray*}
  \mathbb{P}\left(\sum_{i = N + 1}^{\infty} v_i^2 \leq \frac{\epsilon^2}{2}\right) \geq   1 - \frac{2\sum_{i = N + 1}^{\infty}\mathbb{E}v_i^2}{\epsilon^2} \ge 1 - 2C_2\frac{\sum_{i = N + 1}^{\infty} p_i^2}{R_n^2 \epsilon^2}
\end{eqnarray*}
Thus the tail term is bounded away from $0$ by the assumption on $N$. For the first term right hand side, we proceed as in Lemma 6.2 in Belitser and Ghosal \cite{Belitser}
with centered Gaussian with variance $\{\frac{k_ip_i^2}{R_n^2}\}$ to get
\begin{eqnarray*}
\mathbb{P}\left(\sum_{i = 1}^{N} v_i^2 \leq \frac{\epsilon^2}{2}\right) \geq \left(\prod_{i = 1}^N{\frac{p_N}{p_i}}\right)\mathbb{P}\left(\sum_{i = 1}^{N}u_i^2 \leq \frac{(R_n \epsilon)^2}{2p_N^2}\right)
\end{eqnarray*}
Where $u_i$ are i.i.d standard normals. Applying Chebyshev's inequality again, and the second condition on $N$, we can show that $\mathbb{P}\left(\sum_{i = 1}^{N}u_i^2 \leq \frac{(R_n \epsilon)^2}{2p_N^2}\right)$
is bounded away from $0$, hence the theorem follows.
\end{proof}

Note that we did not directly use the non-centered version in Lemma 6.2 \cite{Belitser} because we do not know how the bias term $G(u_0)$ expands in term of the basis of $\nu_n$. 
\begin{corollary}\label{mild}
If $p_i = i^{-d}$, then we have $$\phi(\nu_n,0,\epsilon) \leq C_3 (\epsilon R_n)^{-\frac{2}{2d - 1}}.$$
\end{corollary}
\begin{proof}
The choice of $N$ can be taken to be such that $N \geq C_4(\epsilon R_n)^{-\frac{2}{2d - 1}}$. This gives $$\phi(\nu_n,0,\epsilon) \leq C_3 (\epsilon R_n)^{-\frac{2}{2d - 1}}.$$
\end{proof}

\begin{corollary}\label{severe}
If $p_i = i^s\exp(-i^{d})$, then we have $$\phi(\nu_n,0,\epsilon) \leq C_5 (- \log(\epsilon R_n))^{\frac{d + 1}{d}}.$$
\end{corollary}
\begin{proof}
The choice of $N$ can be taken to be such that $N \geq C_6(- \log (\epsilon R_n))^{\frac{1}{d}}$. This gives $$\phi(\nu_n,0,\epsilon) \leq C_5 (- \log(\epsilon R_n))^{\frac{d + 1}{d}}.$$
\end{proof}
\begin{theorem}\label{non-cent-small-ball-1}
Let $G$, $\mathcal{C}_n$ and $\nu_n$ be defined as in example discussed in Section \ref{sec:meyer}. Assume $u_0 \in \calH_1^{\gamma}(\mathcal{C}_n)$ and let 
$\sum_i \left(\frac{\left\langle u_0,\varphi_i\right\rangle}{i^{-\gamma}}\right)^2 = R$. Then we have, $$\mathcal{K}(\nu_n, G(u_0), \epsilon) \leq KR_n^2\left(\epsilon^{-\frac{1 + 2\delta - 2\gamma}{\alpha + \gamma}}\vee 1\right)$$ for some $K > 0$.
\end{theorem}

\begin{proof}
Assume $u_0 \in \calH_1^{\gamma}(\mathcal{C}_n)$ and let 
$\sum_i \left(\frac{\left\langle u_0,\varphi_i\right\rangle}{i^{-\gamma}}\right)^2 = R$. 
Let $u_0 = \sum_i u_{0,i}\varphi_i$, and $h = \sum_{i = 1}^{j_0}u_{0,i}G(\varphi_i)$ for some $j_0$.

For such an $h$, notice that
\[ \|h\|_{\nu_n}^2 = \left\langle (G\mathcal{C}_n G^T)^{-1/2}h,(G\mathcal{C}_n G^T)^{-1/2}h\right\rangle_2 = \left\langle (G\mathcal{C}_n G^T)^{-1}h,h\right\rangle_2  = \left\langle \mathcal{C}_n^{-1}G^{-1}h,G^{-1}h\right\rangle_2,\]
implying
\[ \|h\|_{\nu_n}^2 = \left\langle \mathcal{C}_n^{-1}\sum_{i = 1}^{j_0}u_{0,i}\varphi_i,\sum_{i = 1}^{j_0}u_{0,i}\varphi_i\right\rangle_1 = \sum_{i = 1}^{j_0} (R_n^2\lambda_i^{-1} i^{-2\gamma})u_{0,i}^2 i^{2 \gamma} 
\leq RR_n^2\left(\lambda_{j_0}^{-1} j_0^{-2\gamma}\vee 1\right).  \]
The last inequality is true since we know that the sequence $\{\lambda_{j} j^{2\gamma}\}$ is either increasing or decreasing monotonically.
Next, for any operator $K$ let us define $\left.K\right|_{S}$ as the operator $K$ restricted to the subset $S$. Writing $C_1^j$ as the linear span of 
$\{\varphi_{j + 1}, \varphi_{j + 2}....\}$, note that $C_1^j \subset (D_1^{j/4})^{\perp}$, therefore,
\[ \|G(u_0) - h\|_2^2 = \left\|G\sum_{i > j_0}u_{0,i} \varphi_i\right\|_2^2 \leq \left\|\left.G\right|_{C_1^{j_0}}\right\|^2 \left\|\sum_{i > j_0}u_{0,i} \varphi_i\right\|_1^2 
\leq  \left\|\left.G\right|_{(D_1^{j_0/4})^{\perp}}\right\|^2 j_0^{-2\gamma} \left(\sum_{i > j_0} u_{0,i}^2 i^{2\gamma}\right).\]
Hence, we have 
\[ \|G(u_0) - h\|_2^2 \leq R \rho^2_{j_0/4} j_0^{-2\gamma}. \]
We need $R \rho^2_{j_0/4} j_0^{-2\gamma} \leq \frac{\epsilon^2}{2}$. Hence, $j_0 \geq C \epsilon^{-\frac{1}{\alpha + \gamma}}$. Given that $\lambda_i = i^{-1 - 2\delta}$. This implies $\|h\|_{\nu_n}^2 \leq KR_n^2\left(\epsilon^{-\frac{1 + 2\delta - 2\gamma}{\alpha + \gamma}}\vee 1\right)$ for some $K > 0$. Since $\mathcal{K}(\nu_n, G(u_0), \epsilon) \leq \|h\|_{\nu_n}^2$, the result follows.
\end{proof}
\begin{theorem}\label{non-cent-small-ball-2}

Let $G$, $\mathcal{C}_n$ and $\nu_n$ be defined as in the example discussed in Section \ref{sec:G without SVD}. Assume $u_0 \in \calH_1^{\gamma}(\mathcal{C}_n)$ and let 
$\sum_i \left(\frac{\left\langle u_0,\varphi_i\right\rangle}{i^{-\gamma}}\right)^2 = R$. Further, let $\gamma' \equiv \frac{\gamma(1 - r)}{t}$. Then we have, $$\mathcal{K}(\nu_n, G(u_0), \epsilon) \leq KR_n^2\left(\epsilon^{-\frac{1 + 2\delta - 2\gamma'}{\alpha + \gamma'}}\vee 1\right)$$ for some $K > 0$.
\end{theorem}
\begin{proof}
Take $h = \sum_{i =1}^{j_0}u_{0,i}G^{1 - r}(\varphi_i)$ for some $j_0>0$.
Like in the previous theorem, we then estimate the Cameron-Martin norm of
$h$ under the measure $\nu_n$ and ensure that $h$ is close enough to $G(u_0)$. We also know that $j_0^{2\gamma}\rho_{j_0}^{lt}$ is either
increasing, or decreasing monotonically. Then, for some $K>0$
\begin{align*}
\|h\|^2_{\nu_n}  &= \left\|\left(G^{-r + 1}\mathcal{C}_nG^{-r +
1}\right)^{-1/2}h\right\|^2 \\
&= \|\mathcal{C}_n^{-1/2}G^{r-1}h\|^2 \\
&=  \sum_{j = 1}^{j_0} \frac{u_{0,j}^2}{\rho_j^{lt}}\\
&\leq K R R_n^2
\left(j_0^{-2\gamma + 1 + 2\delta}\vee 1\right) \\
&\leq K R R_n^2\left(j_0^{-2\gamma' + 1 + 2\delta}\vee 1\right)
\end{align*}
We now estimate $\|h - G^{1-r}u_0\|$. Note
that $\|G^{1-r}x\| = \left\|\left(\left(\mathcal{C}_n^{-1/l} -
K_2\right)^{-1}\right)^{\frac{1-r}{t}}x\right\| \leq \left\|\left(\mathcal{C}_n^{-1/l}
- K_2\right)^{-1}x\right\|^{\frac{1-r}{t}}$. The last inequality is true
because of Theorem \ref{thm:functional2}, and the fact that $t \geq (1 -
r)$. Next, writing
$M = \|\I + G^tK_2\| = \|(\I - \calC^{\frac1{l}}K_2)^{-1}\|$, we have
\begin{align*}
\|h - G^{1-r}u_0\| &= \left\|G^{1-r}\sum_{j = j_0}^{\infty}u_{0,j}\varphi_j\right\|\\
&\leq  \left\|\left(\mathcal{C}_n^{-1/l} - K_2\right)^{-1}\sum_{j =
j_0}^{\infty}u_{0,j}\varphi_j\right\|^{\frac{1-r}{t}}\\ &\leq 
(MR)^{\frac{1-r}{t}}j_0^{-\frac{\gamma(1 - r)}{t}}\rho_{j_0}^{1 - r}\\ &= Lj_0^{-\gamma' - \alpha}
\end{align*}
We need $Lj_0^{-\gamma' - \alpha} \leq \frac{\epsilon}{2}$. This implies $j_0 \geq C \epsilon^{\frac{-1}{\alpha + \gamma'}}$. Hence, $\|h\|^2_{\nu_n} \leq K_1 R_n^2\left(\epsilon^{-\frac{1 + 2\delta - 2\gamma'}{\alpha + \gamma'}}\vee 1\right)$. We thus have the same bound as in the previous theorem with $\gamma$ relaced by $\gamma'$.
\end{proof}
\begin{theorem}\label{non-cent-small-ball-3}

Let $G$, $\mathcal{C}_n$ and $\nu_n$ be defined as in the example discussed in Section \ref{severely ill posed}. Assume $u_0 \in \calH_1^{\gamma}(\mathcal{C}_n)$ and let 
$\sum_i \left(\frac{\left\langle u_0,\varphi_i\right\rangle}{i^{-\gamma}}\right)^2 = R$. Then we have, $$\mathcal{K}(\nu_n, G(u_0), \epsilon) \leq KR_n^2\left((-\log (\epsilon))^{\frac{1 + 2\delta - 2\gamma}{\beta}}\vee 1\right)$$ for some $K > 0$.
\end{theorem}
\begin{proof}
Take $h = \sum_{i =1}^{j_0}u_{0,i}\mathcal{C}^{-r}\exp (-\mathcal{C}^{-\beta'})(\varphi_i)$ for some $j_0>0$.
Again, we estimate the Cameron-Martin norm of
$h$ under the measure $\nu_n$ and ensure that $h$ is close enough to $G(u_0)$. 
\begin{align*}
\|h\|^2_{\nu_n}  &=  \sum_{j = 1}^{j_0} R_n^2 j^{1 + 2\delta}u_j^2\\
&\leq R R_n^2
\left(j_0^{-2\gamma + 1 + 2\delta}\vee 1\right) \\
\end{align*}
We now estimate $\|h - \mathcal{C}^{-r}\exp (-\mathcal{C}^{-\beta'})u_0\|$. Borrowing notations from Section \ref{sec:meyer}, we have
\begin{align*}
\|h - \mathcal{C}^{-r}\exp (-\mathcal{C}^{-\beta'})u_0\| &= \left\|\mathcal{C}^{-r}\exp (-\mathcal{C}^{-\beta'})\sum_{j = j_0}^{\infty}u_{0,j}\varphi_j\right\|\\
&\leq \left\|\mathcal{C}^{-r}\exp (-\mathcal{C}^{-\beta'})|_{C_1^{j_0}}\right\|\|\sum_{j = j_0}^{\infty}u_{0,j}\varphi_j\|\\
&\leq j_0^{(1 + 2\delta)r - 2\gamma}\exp(-j_0^{\beta})\\
\end{align*}
Thus, for $j_0 = (-\log (\epsilon))^{\frac{1}{\beta}} $, $\|h - \mathcal{C}^{-r}\exp (-\mathcal{C}^{-\beta'})u_0\| \leq \epsilon/2$. This gives $$\mathcal{K}(\nu_n, G(u_0), \epsilon) \leq R R_n^2
\left((-\log (\epsilon))^{\frac{-2\gamma + 1 + 2\delta}{\beta}}\vee 1\right)$$
\end{proof}
\bibliographystyle{abbrv}
\bibliography{madhuresh13}

\end{document}